\newtheorem{theorem}{Theorem}[section]
\newtheorem{corollary}{Corollary}[section]
\newtheorem{Lemma}{Lemma}[section]
\newtheorem{proposition}{Proposition}[section]
\theoremstyle{definition}
\newtheorem*{remark}{Remark}
\newcommand{\floor}[1]{\left\lfloor#1\right\rfloor}
\renewcommand{\d}{\, \text{d}}
\newcommand{\ep}{\varepsilon}
\title{The Finsler Metric Obtained as the $\Gamma$-limit of a Generalised Manhattan Metric}
\author{Hartmut Schwetlick\footnote{E-Mail: schwetlick@maths.bath.ac.uk}, Daniel C. Sutton\footnote{E-Mail: d@csutton.eu} and Johannes Zimmer\footnote{E-Mail: zimmer@maths.bath.ac.uk}\\Department of Mathematical Sciences\\
University of Bath\\
Bath BA2 7AY, U.K.}
\date{1st March 2013}                                           
\begin{document}

\maketitle

\begin{abstract}
The $\Gamma$-limit for a sequence of length functionals associated with a one parameter family of Riemannian manifolds is computed analytically. The Riemannian manifold is of `two-phase' type, that is, the metric coefficient takes values in $\{1,\beta\}$, with $\beta$ sufficiently large. The metric coefficient takes the value $\beta$ on squares, the size of which are controlled by a single parameter. We find a family of examples of limiting Finsler metrics that are piecewise affine with infinitely many lines of discontinuity. Such an example provides insight into how the limit metric behaves under variations of the underlying microscopic Riemannian geometry, with implications for attempts to compute such metrics numerically.
\end{abstract}

\section{Introduction}

We compute explicitly the $\Gamma(L^1)-$limit for the sequence of functionals
\begin{equation}\label{begin}
\int_0^1a_{\rho}\left(\frac{\gamma(\tau)}{\ep}\right)\|\gamma'(\tau)\| \d\tau, \;  \gamma \in W^{1,1}(0,1),
\end{equation}
where for $\rho \in (\tfrac{1}{2},1)$ the function $a_{\rho}$ is defined by
\begin{equation}\label{met123}
a_{\rho}(x,y) := 
\begin{cases}
\beta, & \text {if } (x,y) \in \tfrac{1}{2}(1 - \rho, 1 + \rho)^2\\
1, & \text{if } (x,y) \in [0,1]^2 \setminus \tfrac{1}{2}(1 - \rho,1 + \rho)^2,
\end{cases}
\end{equation}
extended periodically to $\mathbb R^2$. The value of $\beta$ is assumed to be fixed, the range of values will be determined later. The functional \eqref{begin} can be interpreted as the length functional for curves in a Riemannian manifold, for the metric co-efficient $a_{\rho}$. The $\Gamma(L^1)-$limit of such length functionals have been determined in the literature, see \cite{amar98a,buttazzo01a} for details. The main result is that the sequence $\Gamma(L^1)-$converges to a functional of the form
\begin{equation}\label{fins}
\int_0^1 \psi_{\rho}(\gamma'(\tau))\d \tau,
\end{equation}
where $\psi_{\rho}$ is convex and satisfies $\|\xi\| \leq \psi_{\rho}(\xi) \leq \beta\|\xi\|$, cf. \cite{amar98a}. In addition to this, $\psi_{\rho}$ is characterised by the \emph{asymptotic homogenisation formula},
\begin{equation}\label{asymp}
\psi_{\rho}(\xi) = \lim_{\ep \rightarrow 0} \inf \left\{\int_0^1a_{\rho}\left(\frac{\gamma(\tau)}{\ep}\right)\|\gamma'(\tau)\| \d\tau \colon \gamma \in W^{1,1}(0,1), \gamma(0) = 0, \gamma(1) = \xi \right\}.
\end{equation}
The focus of this study is to evaluate \eqref{asymp} for \eqref{begin}. The case $\rho=1$ has been previously calculated in \cite{c03,oberman09a}, and the limit $\psi_{1}(\xi)$ corresponds to the Manhattan norm. This is to be expected as on the microscopic scale one is confined to moving parallel to the $x$ or $y$ axis when the end points are in the region where $a_{\rho}(x,y) = 1$. It is in fact this property that ensures geodesics are easy to compute on the microscopic scale. Here we formulate a more general problem, that is, we allow our `streets' on the microscopic scale to have a non-trivial width, controlled by $\rho$, see figure \ref{a0}. An alternative interpretation of $\rho$ is that it controls the degree of obstruction imposed by the high cost regions. We can then study the impact of changing this microscopic information on the macroscopic description given by \eqref{asymp}.

The line of argument for evaluating \eqref{asymp} resembles \cite[Chapter 16]{braides98a}, where a checkerboard geometry is considered, with sufficiently high contrast to ensure that one set of squares may be entered, whereas the other set can not. The underlying microscopic features of the checkerboard metric make it easy to compute a geodesic by elementary geometric reasoning. In contrast, the problem considered in this paper has a geometry depending on a free parameter and a less restrictive underlying structure; it is thus unclear initially what a geodesic should be, we therefore need additional arguments to determine this. In particular, we reduce the infinite dimensional geodesic problem to a finite dimensional minimisation problem, based on several stages of geometric reasoning. We then solve the minimisation problem. It is note worthy to mention the work of \cite{amar09a}. Where also the checkerboard geometry is considered, but developed by an approach based on Snell's law, that has lower contrasts, where geodesics may begin to enter the higher contrast regions. This approach could be adapted to \eqref{met123} for $1 < \beta \leq 2$, given the additional considerations we make here, but does not help to evaluate \eqref{asymp} for $\beta > 2$. Examples of effective Hamiltonians for different metric geometries have been previously computed in \cite{acerbi84a,concordel97a,c03,braides98a}. The approach of these papers differs from the result here in the sense that unlike here, the metric coefficient is such that a geodesics can essentially be read off. 

To the best of our knowledge, no other example gives the homogenised limit as piecewise affine on infinitely many pieces, which may be an interesting unobserved phenomenon. Such an example may provide additional insight into the lower contrast checkerboard problem in \cite{amar09a}, where the authors experience difficulty in computing the full effective metric for $\beta$ close to one, but can compute the limit outside of the region where we find infinitely many likes of nondifferentiability accumulating.

This result seems to be the first to include a parameter that modifies the microscopic information, showing explicitly how this effects the macroscopic description given by \eqref{asymp}. The effect of varying $\rho$ can be seen in figure \ref{figur:2} in section \ref{sec3}. In particular we recover that the limit metric as $\rho$ tends to 1 produces the Manhattan metric. Additionally, the limit metric for $\rho \in (\tfrac{1}{2},1)$ produces infinitely many lines of discontinuity, therefore provides a significant challenge when trying to determine the limit metric numerically using methods as in \cite{gomes04a, oberman09a}.

Finally we mention two areas to which this example can be applied. The first is the minimisation of \eqref{begin} as the nonlinear Fermat's principle, where the values of $a_{\rho}$ define the refractive index of a optical material, as in \cite{amar09a}. For the model to hold, it is necessary to assume that the wave length of the light is much greater than the length scale $\ep$ and that we model only refractive light rays. The second application is connected to the propagation of a wave front though a heterogeneous media and the averaging of Hamiltonian dynamics as described in \cite{lions09a, c03, gomes01a, lions88a}. To see this connection first observe that by \cite{amar98a} it holds that \eqref{begin} $\Gamma(L^1)-$converges to \eqref{asymp} if and only if
\begin{equation}\label{esqua}
\int_0^1a_{\rho}\left(\frac{\gamma(\tau)}{\ep}\right)^2\|\gamma'(\tau)\|^2 \d\tau, \;  \gamma \in W^{1,2}(0,1)
\end{equation}
$\Gamma(L^2)-$converges to 
\begin{equation*}
\int_0^1 \psi_{\rho}(\gamma'(\tau))^2\d \tau.
\end{equation*}
The integrand of \eqref{esqua} may be interpreted as a Lagrangian, with corresponding Hamiltonian $H_{\ep}(p,x) = a_{\rho}\left(x/\ep\right)^2\|p\|^2$; a Hamiltonian related to the prorogation of wave fronts in heterogeneous media. The results of \cite{braides98a, evans92a,lions88a} state that solutions of the corresponding \emph{Hamilton-Jacobi} PDE
\begin{equation}\label{HJPDE}
\frac{\partial u}{\partial t} + H_{\ep}(\nabla_x u, x) = 0,
\end{equation}
subject to a suitable boundary condition on $u$, converge uniformly to
\begin{equation*}
\frac{\partial u}{\partial t} + \psi_{\rho}(\nabla_x u)^2 = 0
\end{equation*}
where we may think of $\psi_{\rho}^2$ as an effective Hamiltonian. Therefore our results provide insight into the effect that homogenisation has on the Hamilton-Jacobi PDE. In particular, our example has consequences for attempting to find the effective Hamiltonian by numerical methods as in \cite{gomes04a, oberman09a}. We also note that there is a connection between the regularity of the effective Hamiltonian and its corresponding solution as described in \cite{gomes01a}, the impact of this example on their work is left for future research. 

For notation, throughout we take $\mathbb N = \{1,2,3,...\}$, $|\cdot|$ the modulus function and $\| \cdot \|$ denotes the Euclidean norm on $\mathbb R^2$.
\subsection*{Acknowledgements} DCS was supported by an EPSRC Doctoral Training Account. The authors are grateful for funding from the network  ``Mathematical Challenges of Molecular Dynamics: A   Chemo-Mathematical Forum'' (EP/F03685X/1). 

\section{Characterisation of a class of geodesics for a single scale}\label{sec2}

\subsection{Reduction to shortest path problem on a finite discrete graph}\label{reduction}

In this section, we reduce the computation of a geodesic to that of a shortest path on a discrete graph. In this context a geodesic joining $(x_1,y_1)$ to $(x_2,y_2)$ is a curve $\gamma$, parameterised on $(0,1)$, minimising \eqref{begin} subject to $\gamma(0) = (x_1,y_1)$ and $\gamma(1) = (x_2,y_2)$. We compute a specific family of geodesics, for reasons outlined in section \ref{sec3}, using the length functional \eqref{begin}. In particular we determine a geodesic joining $ \left( \tfrac{1}{2}(1-\rho), -\tfrac{1}{2}(1-\rho) \right)$ to $ \left(M + \tfrac{1}{2}(1-\rho), N- \tfrac{1}{2}(1-\rho) \right)$ for $(M,N) \in \mathbb N^2$ with $M > N$. This is clearly equivalent to computing geodesics joining $(0,0)$ to $(M,N)$ in the shifted length functional
\begin{equation}\label{slen}
\int_0^1A_{\rho}(\gamma(\tau))\|\gamma'(\tau)\| \d\tau, \; \gamma \in W^{1,1}(0,1),
\end{equation}
where
\begin{equation*}
A_{\rho}(x,y) := a_{\rho}\left(x + \tfrac{1}{2}(1-\rho), y - \tfrac{1}{2}(1-\rho) \right).
\end{equation*}
For the remainder of this section we consider the latter minimisation problem, for some $M,N$ fixed, as the notation for this problem is less cumbersome. Let us define the sets $TL := (0,1) +  \mathbb Z^2$, $TR :=   (\rho, 1)+ \mathbb Z^2$, $BL :=   (0,1- \rho)+ \mathbb Z^2$, and $BR :=  (\rho, 1- \rho) + \mathbb Z^2$ corresponding to the top left/right and bottom left/right corners of the squares in $\Omega_{\text{g}}$ in the shifted metric, respectively. In addition, let $\Omega_{\text{g}}$ be the set of points $(x,y)$ where $A_{\rho}(x,y) = \beta$ and $\Omega_{\text{w}} := \mathbb R^2 \setminus \Omega_{\text{g}}$. See figure 1 for an illustration of the notation.

\begin{figure}[htbp]
\begin{center}
\includegraphics[scale = 1.15]{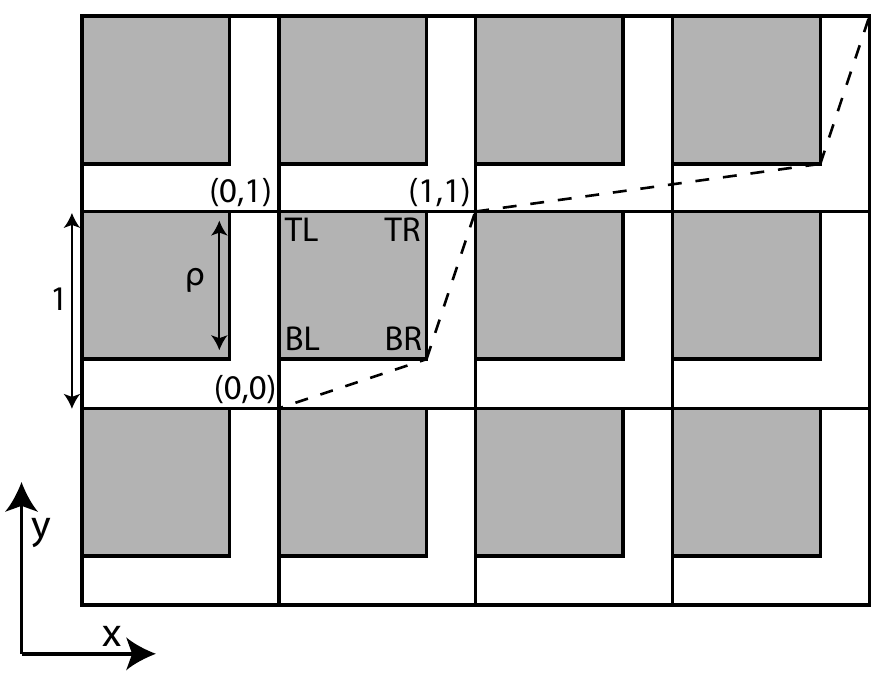}
\caption{Sketch of the shifted geodesic problem. Elements of the sets $TL$, $TR$, $BL$ and $BR$ are indicated. A geodesic for the shifted length functional joining $(0,0)$ to $(3,2)$ is shown. The shaded regions indicate $\Omega_{\text{g}}$.}
\label{a0}
\end{center}
\end{figure}

The length functional \eqref{begin} induces a metric on $\mathbb R^2$ by setting
\begin{multline}\label{dist}
d_{\ep}((x_1,y_1),(x_2,y_2)) =\\ \inf \left\{ \int_0^1a_{\rho}\left(\frac{\gamma(\tau)}{\ep}\right)\|\gamma'(\tau)\| \d\tau \colon \gamma \in W^{1,1}(0,1), \gamma(0)= (x_1,y_1), \gamma(1) = (x_2,y_2) \right\}.
\end{multline}
Recall that the integral in the definition \eqref{dist} may be reparameterised to another interval without changing the value of $d$. Furthermore, $d_{\ep}$ satisfies
\begin{equation}\label{euc}
 |(x_1,y_1)-(x_2,y_2)|\leq d_{\ep}((x_1,y_1),(x_2,y_2)) \leq \beta |(x_1,y_1)-(x_2,y_2)|.
\end{equation}

Since $d_{\ep}$ is uniformly equivalent to the Euclidean metric it follows that $(d_{\ep},\mathbb R^2)$ is complete, therefore by the Hopf-Rinow theorem \cite[Chapter 1]{jost05a} a geodesic exists for any given boundary conditions. The existence of geodesics for \eqref{slen} follows by identical considerations and we denote in this case, a geodesic joining $(0,0)$ to $(M,N)$ by $\gamma$. 

The following Lemma states that a geodesic joining $(0,0)$ to $(M,N)$ does not pass through $\Omega_{\text{g}}$, should the oscillation of $A_{\rho}$ be large enough. In addition, it restricts our attention to piecewise affine curves. 

\begin{Lemma}\label{nopass}
Any geodesic with endpoints in $\Omega_{\text{w}}$ does not pass through $\Omega_{\text{g}}$ for $\beta > 2$. Furthermore $\gamma$ is piecewise affine.
\end{Lemma}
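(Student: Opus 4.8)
The plan is to argue by contradiction and by a cut-and-paste (surgery) argument. Suppose a geodesic $\gamma$ joining $(0,0)$ to $(M,N)$ — both endpoints in $\Omega_{\text{w}}$ — does enter the interior of $\Omega_{\text{g}}$. Since $\Omega_{\text{g}}$ is an open union of disjoint squares of side $\rho$, the portion of $\gamma$ inside a given such square $Q$ consists of subarcs entering and exiting through $\partial Q$. Consider one such maximal subarc $\sigma$, with endpoints $P_1, P_2 \in \partial Q$, which lies (apart from its endpoints) in the interior of $Q$. On $\sigma$ the metric coefficient $A_\rho$ equals $\beta$, so the contribution of $\sigma$ to \eqref{slen} is $\beta \cdot \mathrm{length}(\sigma) \geq \beta \|P_1 - P_2\|$. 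I would replace $\sigma$ by a detour $\tilde\sigma$ that runs along $\partial Q$ from $P_1$ to $P_2$ (the shorter of the two boundary arcs), which lies in $\overline{\Omega_{\text{w}}}$ and hence has coefficient $1$. Since $\partial Q$ is the boundary of a square of side $\rho$, the boundary distance between any two of its points is at most $2\rho < 2 < \beta$ times... more precisely, the length of the shorter boundary arc is at most $2\rho$ while $\|P_1 - P_2\| \leq \sqrt{2}\rho$; the crude bound $\mathrm{length}(\tilde\sigma) \leq 2\rho \leq 2\|P_1-P_2\|$ (since $\|P_1-P_2\| \geq \rho$ when $P_1,P_2$ lie on distinct sides, and the case of the same side is trivial since then $\tilde\sigma$ can be taken as the straight segment on that side) shows the new length is at most $2\,\mathrm{length}(\tilde\sigma\text{-chord})$, giving cost $\leq 2\|P_1-P_2\| \leq 2\,\mathrm{length}(\sigma) < \beta\,\mathrm{length}(\sigma)$ for $\beta > 2$. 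Performing this surgery on every subarc of $\gamma$ inside $\Omega_{\text{g}}$ strictly decreases \eqref{slen}, contradicting minimality. One must be slightly careful that the new curve is still in $W^{1,1}$ and that replacing subarcs on different squares does not interact badly — but the squares are disjoint and the modifications are local, so this is routine.

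For the piecewise-affine claim: once we know $\gamma$ avoids the open set $\Omega_{\text{g}}$, it lies entirely in the closed set $\overline{\Omega_{\text{w}}}$, on which $A_\rho \equiv 1$. Thus $\gamma$ is length-minimising in the ordinary Euclidean sense among curves in $\overline{\Omega_{\text{w}}}$ with the given endpoints — i.e. it is a Euclidean shortest path in a planar domain whose complement is a union of disjoint open squares. Such shortest paths are well known to be polygonal: between two consecutive "contacts" with the obstacles a shortest path must be a straight segment (any non-straight piece staying in $\overline{\Omega_{\text{w}}}$ could be shortened by taking the chord, which, near a point where the curve is not touching $\partial\Omega_{\text{g}}$, still lies in $\overline{\Omega_{\text{w}}}$), and the set of contact points with any one square is contained in that square's boundary; a taut-string argument confined to the boundary of a convex polygon (the square) forces the path to bend only at the square's corners. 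Hence $\gamma$ consists of finitely many straight segments with vertices at points of $TL \cup TR \cup BL \cup BR$ together with the two endpoints, so $\gamma$ is piecewise affine.

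The main obstacle is the second part rather than the first: making rigorous that the Euclidean geodesic in the slit plane can only change direction at the corners of the squares. The cleanest route is a local taut-string argument — fix a point $p$ on $\gamma$ in the interior of $\Omega_{\text{w}}$ and a small ball around it disjoint from $\overline{\Omega_{\text{g}}}$; minimality forces $\gamma$ to be straight there. If $p \in \partial\Omega_{\text{g}}$ lies on the relative interior of an edge of a square, a short transversal perturbation pushing $\gamma$ off that edge (into $\Omega_{\text{w}}$) is admissible and strictly shortens unless $\gamma$ was already straight and tangent, and one then checks a straight segment tangent to an edge can be replaced to avoid contact — so the only contacts that genuinely constrain the path are at corners. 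Assembling these local statements into the global "finitely many segments" conclusion uses compactness of $\gamma([0,1])$ and the fact that there are only finitely many squares within bounded distance of the segment from $(0,0)$ to $(M,N)$, using the a priori bound $\mathrm{length}(\gamma) \leq \beta\sqrt{M^2+N^2}$ from \eqref{euc}.
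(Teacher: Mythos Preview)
The paper does not actually prove this lemma: it defers to \cite[Example 16.2]{braides98a} and \cite{suttontese}. Your surgery/taut-string argument is the standard one and is in essence what those references do, so the overall approach is fine.

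There is, however, a genuine slip in your estimate for the detour. You assert that ``$\|P_1-P_2\| \geq \rho$ when $P_1,P_2$ lie on distinct sides'', and from this deduce $\mathrm{length}(\tilde\sigma) \leq 2\rho \leq 2\|P_1-P_2\|$. This is false for \emph{adjacent} sides: if $P_1$ and $P_2$ lie on adjacent edges at distances $a,b$ from the shared corner, then $\|P_1-P_2\|=\sqrt{a^2+b^2}$ can be arbitrarily small. The inequality $\mathrm{length}(\tilde\sigma)\leq 2\|P_1-P_2\|$ is nevertheless true in this case, since the boundary path through the corner has length $a+b \leq \sqrt{2}\sqrt{a^2+b^2} < 2\|P_1-P_2\|$. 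With this correction your three cases (same side, adjacent sides, opposite sides) combine to give the uniform bound $\mathrm{length}(\tilde\sigma)\leq 2\|P_1-P_2\|$, and your chain $2\|P_1-P_2\| \leq 2\,\mathrm{length}(\sigma) < \beta\,\mathrm{length}(\sigma)$ then goes through for $\beta>2$.

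Your treatment of the piecewise-affine claim is correct and in fact overshoots: you argue not only that $\gamma$ is polygonal but that its vertices can occur only at the corners of the squares, which is precisely the content of the paper's next lemma (Lemma~\ref{corner}). For Lemma~\ref{nopass} itself only ``piecewise affine'' is required, and your local straightening argument in $\mathrm{int}(\Omega_{\text{w}})$ together with the compactness/finiteness observation already suffices for that. The additional edge-contact analysis you sketch is exactly what the paper isolates separately in Lemma~\ref{corner}.
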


\begin{proof}
Similar to \cite[Example 16.2]{braides98a}, or see \cite{suttontese} for a detailed proof for this particular case.
\end{proof}

For the remainder of this paper it is assumed that $\beta > 2$. The calculation when $1 < \beta < 2$ is more involved; an example of such a calculation for a checkerboard metric is the subject of \cite{amar09a}. We define $I := \{(x,y) \in \mathbb R^2 \colon \gamma(T) = (x,y), \; \lim_{\tau \rightarrow T^+} \gamma'(\tau) \neq \lim_{\tau \rightarrow T^-} \gamma'(\tau) \}$, that is, the points in $\mathbb R^2$ where a geodesic changes direction. The next Lemma shows that a geodesic only changes direction at the corners of $\Omega_{\text{g}}$.

\begin{Lemma}\label{corner}
It holds that
\begin{equation*}
\left(\mathbb R^2 \setminus \left( TL \cup TR \cup BL \cup BR \right)  \right) \cap I = \emptyset.
\end{equation*}
\end{Lemma}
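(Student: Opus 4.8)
The plan is to argue by contradiction, exploiting that by Lemma~\ref{nopass} the geodesic $\gamma$ avoids the open set $\Omega_{\text{g}}$ and hence lies in the closed set $\Omega_{\text{w}}$, on which $A_\rho \equiv 1$. Consequently the contribution of any subarc of $\gamma$ to \eqref{slen} equals that subarc's Euclidean length, and $\gamma$ is piecewise affine. If $\gamma$ changed direction at a non-corner point $p$, the strategy is to replace a short subarc of $\gamma$ around $p$ by a straight segment that still lies in $\Omega_{\text{w}}$ and is strictly shorter, contradicting minimality.

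The first step is to isolate the underlying geometric fact: if $p \in \Omega_{\text{w}}$ and $p \notin TL \cup TR \cup BL \cup BR$, then there is an open ball $B(p,r)$ such that $B(p,r) \cap \Omega_{\text{w}}$ is convex. Here one uses that $\Omega_{\text{g}}$ is a disjoint union of open squares of side $\rho$ whose pairwise distance is at least $1 - \rho > 0$. Either $p$ lies in the interior of $\Omega_{\text{w}}$, so that a small ball about $p$ is contained in $\Omega_{\text{w}}$; or $p \in \partial \Omega_{\text{g}}$, and, not being a corner, $p$ lies in the relative interior of a single edge of a single square, so that for $r$ smaller than the distances from $p$ to that edge's two endpoints and to every other square, $B(p,r) \cap \Omega_{\text{w}}$ is a closed half-disc. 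In both cases the set is convex.

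For the second step, suppose $p = \gamma(T) \in I$ is not a corner and fix $r$ as above. Parametrising $\gamma$ at constant speed (permissible, as \eqref{slen} is invariant under reparametrisation), the condition $p \in I$ means the unit tangent of $\gamma$ jumps at $T$. Since $\gamma$ is piecewise affine, for all sufficiently small $\delta > 0$ the arcs $\gamma|_{[T-\delta,T]}$ and $\gamma|_{[T,T+\delta]}$ are the straight segments $[q_-,p]$ and $[p,q_+]$, where $q_\pm := \gamma(T\pm\delta)$, and $\delta$ may be taken so small that both segments lie in $B(p,r)$. As the image of $\gamma$ lies in $\Omega_{\text{w}}$, both segments lie in the convex set $B(p,r)\cap\Omega_{\text{w}}$, hence so does $[q_-,q_+]$. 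Because $A_\rho \equiv 1$ there, the contribution of $\gamma|_{[T-\delta,T+\delta]}$ to \eqref{slen} is $|q_- - p| + |p - q_+|$, while a parametrisation of $[q_-,q_+]$ contributes $|q_- - q_+|$. Since the unit tangent jumps at $T$, we get the strict inequality $|q_- - q_+| < |q_- - p| + |p - q_+|$ (in the degenerate case of an exact reversal, $q_- = q_+$ whereas the replaced arc has positive length). Splicing $[q_-,q_+]$ into $\gamma$ yields an admissible curve with the same endpoints and strictly smaller length, contradicting that $\gamma$ is a geodesic. Hence $(\mathbb R^2 \setminus (TL \cup TR \cup BL \cup BR)) \cap I = \emptyset$.

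I expect the only genuine difficulty to be making the first step fully rigorous: one must verify that ``$p$ is not a corner'' rules out a re-entrant local picture of $\Omega_{\text{w}}$ near $p$, which comes down to careful bookkeeping of the positions of the squares forming $\Omega_{\text{g}}$ and of their mutual separation $1-\rho$. Once the local convexity is in hand, the shortcut construction and the strict triangle inequality are routine.
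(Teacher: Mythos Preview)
Your argument is correct and follows essentially the same route as the paper's proof: contradiction via Lemma~\ref{nopass}, a case split according to whether the putative kink point lies in $\text{int}(\Omega_{\text{w}})$ or on $\partial\Omega_{\text{w}}\setminus(TL\cup TR\cup BL\cup BR)$, and in each case replacement of a short subarc by the Euclidean chord inside a ball (respectively half-ball) contained in $\Omega_{\text{w}}$. Your explicit appeal to the convexity of $B(p,r)\cap\Omega_{\text{w}}$ and to the piecewise-affine conclusion of Lemma~\ref{nopass} makes the shortcut construction slightly cleaner than the paper's connected-component formulation, but the substance is identical.
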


\begin{proof}
Suppose the contrary. By Lemma \ref{nopass} any geodesic does not pass through $\Omega_{\text{g}}$, therefore given $x \in I$ it holds that $x \in \text{int}(\Omega_{\text{w}}) \cup \partial \Omega_{\text{w}}$. Suppose first that $x \in \text{int}(\Omega_{\text{w}})$, then there exists an open ball $\mathscr B_{r}(x) \subset \text{int}(\Omega_{\text{w}})$. Let $G$ be the connected component of $\text{Image}(\gamma) \cap \mathscr B_{r}(x)$ containing $x$ and let $T := \{ \tau \colon \gamma(\tau) \in G \}$. Set $s = \inf T$ and $t = \sup T$ and define
\begin{equation*}
v(\tau) := 
\begin{cases}
\displaystyle \frac{\gamma(t) - \gamma(s)}{t - s}(\tau - s) + \gamma(s) & \text{ if } \tau \in (s,t),\\
\gamma(\tau) & \text{ otherwise}.
\end{cases}
\end{equation*}

\begin{figure}[htbp]
\begin{center}
\includegraphics[scale = 0.9]{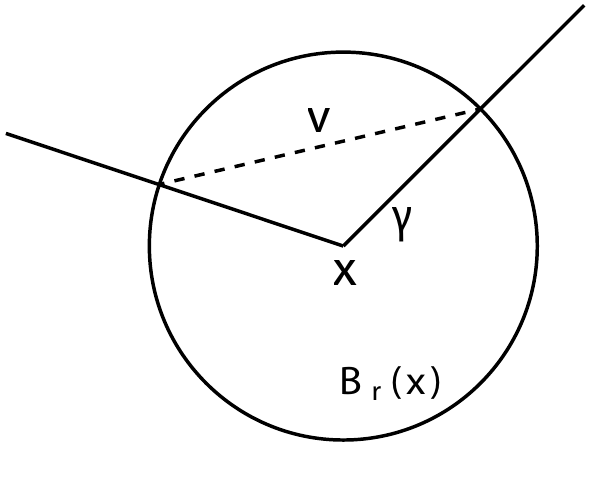}
\caption{Construction in Lemma \ref{corner}}
\label{a1}
\end{center}
\end{figure}

See figure \ref{a1} for an illustration of the construction. By construction $v \neq \gamma$ and
\begin{align*}\label{esta2}
\int_{s}^{t} a_{\rho}(v(\tau))\| v'(\tau) \| \d \tau <  \int_{s}^{t} a_{\rho}(\gamma(\tau))\| \gamma'(\tau) \| \d \tau,
\end{align*}
contradicting the minimality of $\gamma$. Now suppose that $x \in \partial \Omega_{\text{w}}$. Since $x$ by assumption is not at a corner of $\Omega_{\text{g}}$, there exists a half ball $\mathscr H_r(x)$ such that the flat edge is contained in $\partial \Omega_{\text{w}}$. Applying the previous argument to $\mathscr H_r(x)$ leads in a similar manner to the conclusion that $\gamma$ is not minimal.
\end{proof}

By Lemmas \ref{nopass} and \ref{corner} it follows that a geodesic consists of straight line segments joined at the corners of $\Omega_{\text{g}}$. The following Lemma reduces the number of potential geodesics to a finite set. 

\begin{Lemma}\label{containment}
The image of a geodesic joining $(0,0)$ to $(M,N)$ is contained in $[0,M] \times [0,N]$.
\end{Lemma}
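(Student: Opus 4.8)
The plan is to show that any geodesic $\gamma$ joining $(0,0)$ to $(M,N)$ must stay in the box $R := [0,M]\times[0,N]$, by a reflection/truncation argument that strictly decreases length whenever $\gamma$ leaves $R$. Since by Lemmas \ref{nopass} and \ref{corner} a geodesic is a polygonal path with vertices only at the corners of $\Omega_{\text{g}}$ and never enters $\Omega_{\text{g}}$, it suffices to rule out excursions outside each of the four half-planes $\{x<0\}$, $\{x>M\}$, $\{y<0\}$, $\{y>N\}$. I would treat one side, say $\{x<0\}$, in detail and note the others are symmetric (using $M>N\ge 1$ so the box is nondegenerate).

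First I would set up the truncation. Suppose $\gamma$ enters $\{x<0\}$. Because $\gamma(0)=(0,0)$ and $\gamma(1)=(M,N)$ both lie in $\{x\ge 0\}$, the set $\{\tau : \gamma_1(\tau)<0\}$ is a union of open subintervals; pick a maximal one $(s,t)$ with $\gamma_1(s)=\gamma_1(t)=0$. Replace $\gamma|_{(s,t)}$ by its reflection across the line $\{x=0\}$, i.e. $\tilde\gamma(\tau):=(-\gamma_1(\tau),\gamma_2(\tau))$ on $(s,t)$ and $\tilde\gamma=\gamma$ otherwise; this is still in $W^{1,1}$, joins the same endpoints, and has $\|\tilde\gamma'\|=\|\gamma'\|$ a.e. The key point is that reflection across $\{x=0\}$ maps the periodic tiling to itself — $\{x=0\}$ is a line of symmetry of the pattern $A_\rho$ in the shifted coordinates — so $A_\rho(\tilde\gamma(\tau))=A_\rho(\gamma(\tau))$, whence $\tilde\gamma$ has exactly the same length as $\gamma$: it is also a geodesic. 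Now I would argue that this reflected geodesic has a genuine corner at $\gamma(s)$ (the incoming and reflected tangents differ) at a point lying on the line $\{x=0\}$, and then invoke Lemma \ref{corner}: such a corner is only allowed if $\gamma(s)\in TL\cup TR\cup BL\cup BR$. In the shifted coordinates the corner lattice meets $\{x=0\}$ only at the $TL$/$BL$ points $(0,n)$ and $(0,n-\rho)$; a short case check on the admissible turning angles there (the geodesic cannot enter $\Omega_{\text{g}}$, and the two $\Omega_{\text{w}}$ directions available at such a corner are the horizontal/vertical street directions) shows the reflected path would have to immediately re-enter $\Omega_{\text{g}}$ or violate Lemma \ref{nopass}, a contradiction. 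Alternatively, and more cleanly, I would apply the shortcut construction of Lemma \ref{corner} directly: the concatenation of $\gamma|_{(0,s)}$ with $\tilde\gamma|_{(s,t)}$ has a corner at an interior point of $\Omega_{\text{w}}$ on $\{x=0\}$ (since $x=0$ is not among the corner lines $x\in\{0,\rho\}+\mathbb Z$... here one must be careful: $x=0$ \emph{is} such a line), so I would instead push the path slightly into $\{x>0\}$ using a small ball around $\gamma(s)$ contained in $\Omega_{\text{w}}$, straightening the excursion and strictly reducing length — contradicting minimality of $\gamma$.

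The honest version is the last one: rather than reflect, I would directly take $(s,t)$ maximal with $\gamma_1<0$ on $(s,t)$, and replace $\gamma|_{[s,t]}$ by the vertical segment from $\gamma(s)=(0,y_s)$ to $\gamma(t)=(0,y_t)$ along $\{x=0\}$. The portion of $\gamma$ on $(s,t)$ has length $\int_s^t A_\rho(\gamma)\|\gamma'\|\,\d\tau \ge \int_s^t A_\rho(\gamma)\,|\gamma_2'|\,\d\tau \ge |y_t-y_s|$ (using $A_\rho\ge 1$), while the replacement segment, lying on $\{x=0\}$ which is an edge of $\Omega_{\text{w}}$ where $A_\rho=1$, has length exactly $|y_t-y_s|$; and the first inequality is strict because $\gamma$ actually moves in the $x$-direction on $(s,t)$ (it reaches $x<0$ and returns), so $\|\gamma'\|>|\gamma_2'|$ on a set of positive measure. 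This gives a strictly shorter competitor, contradicting minimality. The same argument with $x=0$ replaced by $x=M$, $y=0$, $y=N$ handles the other three sides.

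The main obstacle is the degenerate case analysis on the boundary of the box where the relevant line ($x=0$, $x=M$, $y=0$, or $y=N$) coincides with one of the grid lines $\{x\in\{0,\rho\}+\mathbb Z\}$ or $\{y\in\{0,-\rho,1-\rho\}+\mathbb Z\}$ on which $\Omega_{\text g}$ has edges: then part of the replacement segment could in principle run along the boundary of $\Omega_{\text g}$, and one must check $A_\rho=1$ there (the boundary $\partial\Omega_{\text g}$ belongs to $\overline{\Omega_{\text w}}$ and, since the infimum in \eqref{dist} is unchanged by modifying $\gamma$ on a null set of the plane, one works with the lower-semicontinuous value $A_\rho=1$ on that edge) — so that the replacement genuinely does not cost $\beta$. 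Handling this cleanly, together with verifying that the strict inequality survives (i.e. $\gamma$ is not already the vertical segment, which it cannot be if it left the box), is the only delicate point; everything else is the by-now-standard shortening argument already used in Lemma \ref{corner}.
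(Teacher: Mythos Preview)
Your ``honest version'' is correct and is essentially the paper's own argument: the paper too replaces the offending portion of $\gamma$ by the straight segment along $\{x=0\}$ (it takes the segment from $(0,0)$ to $\gamma(t)$ rather than from $\gamma(s)$ to $\gamma(t)$, but this is immaterial) and invokes the strict-shortening reasoning of Lemma~\ref{corner}. Your reflection detour is unnecessary and, as you yourself noticed, runs into the awkwardness that $\{x=0\}$ is a corner line; your explicit verification that $A_\rho=1$ on the boundary of $\Omega_{\text g}$ and that $\|\gamma'\|>|\gamma_2'|$ on a set of positive measure is more careful than the paper's one-line appeal to Lemma~\ref{corner}, but the underlying idea is the same.
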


\begin{proof}
Assume the contrary and suppose further that there exists a point $s \in (0,1)$ such that $\gamma_{1}(s) < 0$, the other cases are treated similarly. As $\gamma \in C^0(0,1)$ and since $\gamma(1) = (M,N)$, by the intermediate value theorem, there exists $t \in (s,1)$ such that $\gamma_{1}(t) = 0$, where $\ell_1$ denotes the first component of $\ell$. Define
\begin{equation*}
v(\tau) := 
\begin{cases}
\displaystyle \frac{\gamma(t)}{t}\tau & \text{ if } \tau \in (0,t),\\
\gamma(\tau) & \text{ otherwise}.
\end{cases}
\end{equation*}
As in Lemma \ref{corner} it follows that $v \neq \gamma$ and $\int_{0}^{t} a_{\rho}(v(\tau))\| v'(\tau) \| \d \tau  <  \int_{0}^{t} a_{\rho}(\gamma(\tau))\| \gamma'(\tau) \| \d \tau$,
contradicting the minimality of $\gamma$. 
\end{proof}

The next Lemma rules out some corners of $\Omega_{\text{g}}$ that a geodesic can pass through. More precisely Lemma \ref{TLBR} shows that a line segment starting at $TL$ must end in a set of $BR$ corners to the right and in the row above. 

\begin{Lemma}\label{TLBR}
Let $\ell \colon (s, t) \rightarrow \mathbb R^2$ be a maximal line segment of a geodesic such that $\ell(s) = (z_1,z_2)  \in TL$ where $z_1 \in \{1,...M-1\}$ and $z_2 \in \{1,...,N-1\}$. Then $ \ell(t) = (z_1 + Z - (1- \rho),z_2 + (1-\rho)) \in BR$ for $Z \in \{1,M-z_1\}$.
\end{Lemma}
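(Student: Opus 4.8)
The plan is to localise everything to the horizontal ``street'' lying directly above the square whose top-left corner is $\ell(s)$, and to identify the exit corner of $\ell$ by combining the obstacle constraints of Lemmas \ref{nopass}--\ref{containment} with a sequence of corner-cutting (minimality) arguments. The starting point is the observation that, by Lemma \ref{nopass}, the whole geodesic $\gamma$ stays in $\overline{\Omega_{\text{w}}}$, where $A_{\rho}\equiv 1$; hence $\gamma$ is a shortest \emph{Euclidean} path from $(0,0)$ to $(M,N)$ among curves avoiding the open squares of $\Omega_{\text{g}}$, i.e. a taut string with these axis-aligned squares as convex obstacles. I would also fix the free strip $\Sigma := \mathbb R\times[z_2,\,z_2+1-\rho]$ and note that no square of $\Omega_{\text{g}}$ meets its interior: the squares lying in the horizontal band $\mathbb R\times[z_2,z_2+1]$ sit at heights $\ge z_2+1-\rho$ and those in the band below sit at heights $\le z_2$. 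Write $S_0:=[z_1,z_1+\rho]\times[z_2-\rho,z_2]$ for the square with top-left corner $\ell(s)=(z_1,z_2)$.

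First I would pin down the direction in which $\ell$ leaves $(z_1,z_2)$. Since $(z_1,z_2)$ is the top-left corner of $S_0$, Lemma \ref{nopass} already forbids $\ell$ from entering the (open) lower-right quadrant. The remaining possibilities -- a backward/downward direction, the vertical direction, and the horizontal direction -- are each ruled out by an explicit corner-cut: working on the free side of $S_0$ (coefficient $1$) one replaces a neighbourhood of the bend at $\ell(s)$, or of the bend at the endpoint of the offending segment, by a straighter arc that still lies in $\overline{\Omega_{\text{w}}}$ and is strictly shorter, contradicting minimality; the global data $\gamma\subset[0,M]\times[0,N]$, $\gamma(1)=(M,N)$ and $M>N$ are used exactly here, in the same spirit as Lemmas \ref{corner} and \ref{containment}, to exclude the backward options, and the maximality of $\ell$ (no collinear extension) is used to exclude the degenerate vertical/horizontal cases. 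This leaves $\ell$ heading into $\Sigma$ with a strictly positive, finite slope. Now the only lines carrying corners of $\Omega_{\text{g}}$ are $\{y\equiv 0\}$ and $\{y\equiv 1-\rho\}\pmod 1$, so there is no corner of $\Omega_{\text{g}}$ in the open strip $z_2<y<z_2+1-\rho$; by Lemma \ref{corner} the geodesic cannot bend there, so $\ell$ runs straight across $\Sigma$ and must terminate at a corner on the line $y=z_2+1-\rho$, that is, at a point of $BL\cup BR$.

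It then remains to exclude the $BL$ alternative and to compute the range of $Z$. If $\ell(t)$ were the $BL$ corner $(z_1+k,\,z_2+1-\rho)$, it would be the bottom-left corner of the square $B:=[z_1+k,\,z_1+k+\rho]\times[z_2+1-\rho,\,z_2+1]$, which occupies the upper-right quadrant at that corner; by Lemma \ref{nopass} the continuation of $\gamma$ cannot climb out of $\Sigma$ there, and a short analysis of its admissible directions at that corner, together with a shortcut replacing the arc of $\gamma$ from $\ell(s)$ by the segment from $(z_1,z_2)$ straight to the neighbouring $BR$ corner $(z_1+k+\rho,\,z_2+1-\rho)$ of $B$ -- a segment one checks lies in $\overline{\Sigma}$, hence is admissible -- yields a strictly shorter competitor, a contradiction. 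Hence $\ell(t)\in BR$, say $\ell(t)=(z_1+Z-(1-\rho),\,z_2+(1-\rho))$ with $Z\in\mathbb N$. Here $Z\ge 1$ because the slope of $\ell$ is positive, and $Z\le M-z_1$ because $\gamma\subset[0,M]\times[0,N]$ forces $z_1+Z-1+\rho\le M$ with $\rho\in(\tfrac12,1)$, which gives the asserted range. The step I expect to be the real obstacle is the direction-pinning in the second paragraph -- in particular ruling out horizontal and backward segments -- since the natural competitor curves there run along edges of $\Omega_{\text{g}}$, where the coefficient equals $1$ on the free side, so the corner-cut must be arranged to be simultaneously length-decreasing and $\Omega_{\text{g}}$-avoiding; it is the flatness of those edges and the strict no-pass property ($\beta>2$) that make this work.
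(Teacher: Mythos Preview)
Your argument has a genuine gap at the central geometric step. From the absence of corners in the open strip $z_2<y<z_2+1-\rho$ you correctly deduce that $\ell$ crosses $\Sigma$ without bending; but you then assert that $\ell$ ``must terminate at a corner on the line $y=z_2+1-\rho$.'' This does not follow. Corners of $\Omega_{\text{g}}$ also lie on the lines $y=z_2+1$, $y=z_2+2-\rho$, $y=z_2+2,\ldots$, and nothing you have written prevents the straight segment $\ell$ from passing through the line $y=z_2+1-\rho$ and terminating at one of those higher corners. Ruling this out is exactly the content of the paper's Case~3: one must show that every segment from $(z_1,z_2)$ to a corner at height $\ge z_2+1$ necessarily meets $\Omega_{\text{g}}$, and this is precisely where the hypothesis $\rho\in(\tfrac12,1)$ is used. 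Concretely, one looks at the first point where the segment reaches height $z_2+1$; its $x$-coordinate either sits under a square (immediate contradiction) or in a gap $[P+\rho,P+1]$, and then an explicit computation shows the segment has already entered the square with right edge $x=P+\rho$ --- the relevant inequality reduces to $(P+\rho)/(P+1)>1-\rho$, which holds for all $P\ge 0$ iff $\rho>\tfrac12$. You invoke $\rho>\tfrac12$ only in the final line, to bound $Z$; as written, your argument would equally ``prove'' the lemma for small $\rho$, where it fails.

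A secondary issue: your exclusion of the $BL$ alternative presumes that the continuation of $\gamma$ from the $BL$ corner reaches the adjacent $BR$ corner of the same square $B$, so that the chord from $(z_1,z_2)$ to that $BR$ corner is a legitimate shortcut. But at a $BL$ corner only the upper-right quadrant is blocked; the continuation might instead run vertically along the left edge of $B$, or re-enter $\Sigma$, in which case your proposed competitor does not connect. You need a direction-by-direction analysis at the $BL$ corner (parallel to what you do at $TL$) before the chord replacement becomes available; the paper handles this by referring back to the arguments of its Cases~1 and~2.
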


\begin{proof}
The proof is split into three cases, depending on the angle at which the line segment leaves $TL$, denoted by $\theta \in [0,2\pi)$, where $\theta = 0$ is parallel to the $x$-axis.

\textit{Case 1: $\theta \in (\pi/2,2\pi)$}. It is clear that if $\theta \in (3\pi/2,2\pi)$ then the line segment would continue into $\Omega_{\text{g}}$, contradicting Lemma \ref{nopass}. It remains to rule out that $\theta \in (\pi/2,3\pi/2]$, which can be achieved using the same construction as in Lemma \ref{containment} to prove there exists a shorter curve.

\textit{Case 2: $\theta \in \{0, \pi/2\}$}. Suppose that $\theta = \pi/2$; the case $\theta = 0$ follows by a similar argument. In this case, $\gamma(s), \gamma(t) \in \{z_1\} \times [0,N]$. As $\gamma \in C^0(0,1)$ and since $\gamma(0) = (0,0)$, it follows that there exists $r \in (0,s)$ such that $\gamma_1(r) \in \{z_1-(1-\rho)\} \times [0,N]$. Therefore, applying the same reasoning as in Lemma \ref{corner}, we see that a geodesic must consist of straight line segments connecting $\gamma(r)$ to $\gamma(s)$ and $\gamma(s)$ to $\gamma(t)$. However, $\gamma(r), \gamma(s)$ and $\gamma(t)$ form a triangle in the set $[z_1-(1-\rho),z_1] \times [0,N]$. This contradicts the minimality of $\gamma$, see figure \ref{a4}.

\begin{figure}[htbp]
\begin{center}
\includegraphics[scale = 1.4]{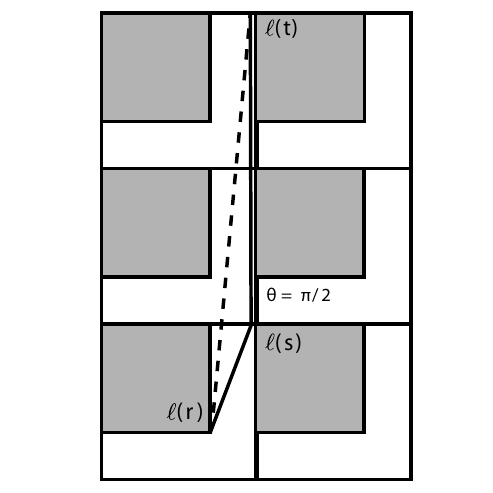}
\caption{Construction in Lemma \ref{TLBR} case 2. The vertical solid curve is the line segment $\ell$.}
\label{a4}
\end{center}
\end{figure}

\textit{Case 3: $\theta \in (0, \pi/2)$}.  Suppose first that the line segment connects $\ell(s)$ to any corner not stated in the Lemma, consequently $\ell_2(t) - \ell_2(s) \geq 1$, where $\ell_2$ is the second component of $\ell$. We prove, for $\rho \in (\tfrac{1}{2},1)$, should this line exist, then it necessarily crosses $\Omega_{\text{g}}$, contradicting Lemma \ref{nopass}. Consider the point $u \in (s,t)$ at which $\ell_2(s) +1= \ell_2(u)$, which exists by continuity. Then, either $\ell_1(u) \in (P, P+\rho)$ for a $P \in \{0,...,M-1\}$, in which case by continuity, $\ell(u-\delta) \in \Omega_{\text{g}}$ for $\delta$ sufficiently small. Alternatively, $\ell_1(u) \in [P+\rho,P+1]$ for a $P \in \{0,...M-1\}$. Parameterise $\ell$ over $(s,u)$ as a graph over the $x$-axis to obtain that $\ell_2(x) = x/\ell_1(u) + \ell_2(s)$ for $x \in (0,\ell_1(u))$. Evaluating $\ell_2$ at $x = P + \rho$ gives
\begin{equation*}
(1-\rho) + \ell_2(s) < \frac{P + \rho}{P+1} + \ell_2(s) \leq \frac{P + \rho}{\ell_1(u)} + \ell_2(s) \leq 1 + \ell_2(s),
\end{equation*}
if, and only if, $\rho \in (\tfrac{1}{2},1)$. Therefore by continuity, $\ell(u-\delta) \in \Omega_{\text{g}}$ for $\delta$ sufficiently small, a contradiction. It remains to rule out that the line segment ends at a BL corner in $W=[\ell_1(s),M]\times(\ell_2(s),\ell_2(s)+(1-\rho)]$. To rule out that the line segment ends in $BL$, repeat the reasoning of cases 1 and 2 for contradiction. Hence the line segment may only terminate at the $BR$ points of $W$ as stated in the theorem.
\end{proof}

Repeating the reasoning in Lemma \ref{TLBR} it is possible to show the analogous result for geodesics starting in $BR$. 

\begin{Lemma}\label{BRTL}
Let $\ell \colon (s, t) \rightarrow \mathbb R^2$ be a maximal line segment of a geodesic such that $\ell(s) =  (z_1 + \rho,z_2 +(1-\rho))   \in BR$ where $z_1 \in \{0,...M-1\}$ and $z_2 \in \{0,...,N-1\}$. Then $\ell(t) = (z_1+1,z_2+Z) \in TL$ for $Z \in \{1,N-z_2\}$.
\end{Lemma}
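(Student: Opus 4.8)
The plan is to repeat the three-stage argument from the proof of Lemma~\ref{TLBR} with the roles of the two coordinate axes --- hence of ``rows'' and ``columns'' --- interchanged. (More slickly: the reflection in the line $\{y=x\}$ composed with the translation by $-(1-\rho,\rho)$ is an isometry of $\R^2$ that maps the square pattern, hence $A_{\rho}$, to itself, interchanges $TL$-corners with $BR$-corners and the two families of streets, and interchanges ``one row upward'' with ``one column across''; applying this symmetry to the statement of Lemma~\ref{TLBR} yields Lemma~\ref{BRTL} directly, the only point to verify being the claimed invariance of the pattern. I nevertheless sketch the direct repetition, since that is what the paper suggests.) By Lemmas~\ref{nopass} and~\ref{corner} we may take $\ell$ to be a straight segment lying in $\overline{\Omega_{\text{w}}}$ with $\ell(t)$ a corner of $\Omega_{\text{g}}$, so the task is just to pin down that corner. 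Write $p=\ell(s)=(z_1+\rho,z_2+(1-\rho))$, the bottom-right corner of the green square $Q:=[z_1,z_1+\rho]\times[z_2+1-\rho,z_2+1]$, and split according to the angle $\theta\in[0,2\pi)$ at which $\ell$ leaves $p$, with $\theta=0$ the positive $x$-direction.

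For $\theta\in(\pi/2,\pi)$ the segment immediately enters $Q\subset\Omega_{\text{g}}$, contradicting Lemma~\ref{nopass}. For the remaining angles outside $(0,\pi/2)$ --- the values $\theta\in\{0,\pi/2,\pi\}$, for which $\ell$ would lie flush along $\partial\Omega_{\text{g}}$, and $\theta\in(\pi,2\pi)$, which points away from $(M,N)$ --- I would reuse the chord/triangle constructions of Lemmas~\ref{corner} and~\ref{containment} and of Cases~1--2 of Lemma~\ref{TLBR}, applied to $\ell$ or to the adjacent segment of $\gamma$: replacing the offending piece of $\gamma$ by a straight chord through a convex white street strictly decreases \eqref{slen}, contradicting minimality. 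This leaves the substantive case $\theta\in(0,\pi/2)$.

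For $\theta\in(0,\pi/2)$ I would first note that the open vertical street $\{x\bmod 1\in(\rho,1)\}$ contains no green square, so $\ell$ stays in $\Omega_{\text{w}}$ while $x\in(z_1+\rho,z_1+1)$; since no corner of $\Omega_{\text{g}}$ has $x$-coordinate strictly between $z_1+\rho$ and $z_1+1$, the segment must reach the line $x=z_1+1$, at the point $(z_1+1,y_1)$ with $y_1=z_2+(1-\rho)(1+\tan\theta)$. I would then show that unless the fractional part of $y_1$ is $0$ or $1-\rho$, the point $(z_1+1,y_1)$ is not a corner, so $\ell$ must continue into the green column $\{x\bmod 1\in(0,\rho)\}$; computing the height at which $\ell$ first meets a green band inside that column and using $\rho>\tfrac12$, which forces that band to be reached \emph{before} $\ell$ can leave the column, one finds that $\ell$ enters a green square --- again contradicting Lemma~\ref{nopass}. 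Hence $\ell(t)$ lies on $x=z_1+1$ and, since $\ell$ rises, is a $TL$-corner $(z_1+1,z_2+Z)$ or a $BL$-corner, with $Z\ge1$. A $BL$ endpoint I would exclude exactly as the $BL$ endpoints are excluded in Case~3 of Lemma~\ref{TLBR}: there the bend of $\gamma$ is not forced around a green square --- the green square sits to the upper right of a $BL$-corner, while $\ell$ arrives and $\gamma$ leaves on the white side --- so a chord through the adjoining white street shortens $\gamma$. This leaves $\ell(t)=(z_1+1,z_2+Z)\in TL$, and $Z\in\{1,\dots,N-z_2\}$ since the image of $\gamma$ lies in $[0,M]\times[0,N]$ by Lemma~\ref{containment}.

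The main obstacle is the computation in the case $\theta\in(0,\pi/2)$: one must verify that \emph{every} slope failing to land $\ell(t)$ on a corner of the line $x=z_1+1$ drives $\ell$ into $\Omega_{\text{g}}$, keeping track --- over the successive ranges of the slope --- of precisely which green band of the column $x\in(z_1+1,z_1+1+\rho)$ is the one encountered. This is exactly where $\rho\in(\tfrac12,1)$ is needed: it is the condition under which that band is met before $\ell$ escapes the column. The bookkeeping, rather than any single inequality, is the delicate part; everything else transcribes arguments already made in the paper.
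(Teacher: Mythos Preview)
Your proposal is correct and follows precisely the approach the paper itself takes: the paper's entire proof of Lemma~\ref{BRTL} is the single sentence ``Repeating the reasoning in Lemma~\ref{TLBR} it is possible to show the analogous result for geodesics starting in $BR$'', and you carry out exactly that repetition, case by case, with the roles of the two axes swapped. Your explicit symmetry observation (reflection in $\{y=x\}$ composed with a lattice translation preserving $A_\rho$ and swapping $TL\leftrightarrow BR$) is a clean way to see why the transcription must work, and goes slightly beyond what the paper states.
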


Lemmas \ref{TLBR} and \ref{BRTL} state should a geodesic lie in $(0,M) \times (0,N)$ then it necessarily joins points in $TL$ to $BR$ and then $BR$ to $TL$, in a specific way. We now show that we can extend this property further and rule out that a geodesic lies in $\partial \left( (0,M)\times(0,N) \right)$, except for the end points.

\begin{Lemma}\label{novert}
The image of a geodesic is contained in $(0,M)\times(0,N)$, except for the end points.
\end{Lemma}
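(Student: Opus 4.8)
The plan is to argue by contradiction: assuming that $\gamma$ meets $\partial R$, where $R:=[0,M]\times[0,N]$, at a point other than $(0,0)$ or $(M,N)$, I will reduce---by the same type of local surgery used in Lemmas~\ref{corner}--\ref{BRTL}---to the case that $\gamma$ runs along an edge of $R$ issuing from $(0,0)$ or arriving at $(M,N)$, and then obtain a contradiction there too. Throughout I use the elementary fact that each of the four edges of $R$ lies on a line $x\in\mathbb Z$ or $y\in\mathbb Z$, hence in $\Omega_{\text{w}}$, so the cost of any sub-segment of an edge equals its Euclidean length.

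\emph{Bounce removal.} Consider the left edge and put $\tau_1:=\sup\{\tau:\gamma_1(\tau)=0\}$. If $\tau_1>0$, then $\gamma(0)$ and $\gamma(\tau_1)$ both lie on $\{x=0\}\subset\Omega_{\text{w}}$, and replacing $\gamma|_{[0,\tau_1]}$ by the vertical segment joining them cannot raise the cost; so by minimality $\gamma|_{[0,\tau_1]}$ is that segment, i.e.\ $\gamma$ begins with a genuine vertical run $\{0\}\times[0,k]$ whose top end is a corner of $\Omega_{\text{g}}$ on $\{x=0\}$ at which $\gamma$ turns. The same argument on the bottom edge, and---with Lemmas~\ref{TLBR} and \ref{BRTL} interchanged---on the right and top edges near $(M,N)$, together with the fact that $\gamma$ leaves $(0,0)$ and reaches $(M,N)$ in a single direction, leaves four cases to exclude: $\gamma$ begins with a non-trivial run along the left or bottom edge, or ends with one along the right or top edge.

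\emph{Excluding the run; the main obstacle.} Take the left-edge case; the others are analogous. So $\gamma$ starts $(0,0)\to(0,k)\to\cdots$ with $(0,k)$ a corner of $\Omega_{\text{g}}$. The direction bookkeeping behind Lemmas~\ref{nopass} and \ref{containment} (leftward and downward directions leave $R$; the down-right sector runs into the $\Omega_{\text{g}}$-square below-right of $(0,k)$; the upward direction contradicts the definition of $\tau_1$) forces the next maximal segment out of $(0,k)$ to have angle in $[0,\pi/2)$. In the cleanest situation---$(0,k)$ a $TL$-corner with the angle in $(0,\pi/2)$---the argument of the third case of Lemma~\ref{TLBR}, which is invariant under integer translation in $x$ and so applies verbatim at $(0,k)$, forces this segment to end at a $BR$-corner $\big(Z-(1-\rho),\,k+1-\rho\big)$, $1\le Z\le M$; I then \emph{swap}, replacing $(0,0)\to(0,k)\to\big(Z-(1-\rho),k+1-\rho\big)$ by $(0,0)\to\big(Z-(1-\rho),1-\rho\big)\to\big(Z-(1-\rho),k+1-\rho\big)$. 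The new sub-arc has exactly the same length (the same two displacement vectors, in the opposite order), stays in $R$, and avoids $\Omega_{\text{g}}$: its slanted leg lies in the $\Omega_{\text{g}}$-free slab $0\le y\le1-\rho$, and its vertical leg on the line $x\equiv\rho\pmod 1$. Hence the altered curve is still a geodesic but now has a maximal vertical segment issuing from the \emph{interior} $BR$-corner $\big(Z-(1-\rho),1-\rho\big)$ and ending at another $BR$-corner, contradicting Lemma~\ref{BRTL}. The remaining configurations---the angle $0$ (a run rightward to the next corner), a turn at a $BL$- or $TR$-corner instead of a $TL$- or $BR$-corner, and the mirror situations at $(M,N)$---are to be reduced to this one by further swaps across the extra horizontal/vertical legs they produce. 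The extreme case $k=N$, where $\gamma$ would collapse to a union of two whole edges (e.g.\ $(0,0)\to(0,N)\to(M,N)$ of cost $M+N$), is instead excluded by a boundary-avoiding zig-zag through interior $TL$- and $BR$-corners: each of its legs $(a,b)$ has $a,b>0$, hence length $\sqrt{a^2+b^2}<a+b$, so the total cost is strictly below $M+N$ (explicitly, the unit staircase has legs of length $\sqrt{\rho^2+(1-\rho)^2}<1$---this is the one place where $\rho<1$ is used), with finitely many legs adjusted so as to land exactly on $(M,N)$ while staying in the open rectangle. I expect the main obstacle to be the admissibility bookkeeping across this web of cases: at each step one must verify that the surgered or competing curve neither leaves $[0,M]\times[0,N]$ nor enters $\Omega_{\text{g}}$, the delicate points being the corners of $R$ (alongside which $\Omega_{\text{g}}$-squares sit just outside $R$) and vertices that an interior segment may deposit on an edge of $R$. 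Once admissibility is in place, every case closes immediately via Lemmas~\ref{nopass}, \ref{TLBR}, \ref{BRTL} or a strict length comparison.
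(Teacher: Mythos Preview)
Your argument is correct and follows essentially the same route as the paper: reduce to an edge-run issuing from $(0,0)$ (or arriving at $(M,N)$), apply Lemma~\ref{TLBR} to identify the next segment as $TL\to BR$, then swap the two legs to produce an equal-length competitor whose interior vertical run from a $BR$-corner violates Lemma~\ref{BRTL}. The only cosmetic difference is in the extreme case $k=N$: the paper dispatches the full $L$-shape $(0,0)\to(0,N)\to(M,N)$ with a single two-leg competitor through one $BR$-corner, whereas you invoke a multi-step staircase; both give a strict improvement over $M+N$, but the paper's choice is tidier.
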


\begin{proof}
Reasoning as in the proof of Lemma \ref{containment}, it is clear that should a geodesic have a line segment in $\partial \left( (0,M)\times(0,N) \right)$ then it must contain either $(0,0)$ or $(M,N)$, otherwise it is not minimal. Suppose that the line segment contains $(0,0)$, the other case is similar. Should the line segment end at $(0,N)$ then by Lemma \ref{containment} it must continue to join $(0,N)$ to $(M,N)$, giving a total length of $M+N$. However, choosing the curve joining $(0,0)$ to $(M-(1-\rho),\rho) \in BR$ and then onto $(M,N)$ is strictly shorter, therefore the longer curve is not a geodesic. Now suppose that the end of the line segment is $(0,Z) \in \{0\} \times \{1,...,N-1\}$ (otherwise by previous considerations, the curve is not a geodesic). Then by Lemma \ref{TLBR} a geodesic must extend as a line segment joining to a point of the form $(Y-(1-\rho),Z+(1-\rho)) \in BR \cap (0,M) \times (0,N)$ for $Y \in \{1,M\}$. Now consider the curve that first joins $(0,0)$ to $(Y-(1-\rho),1-\rho) \in BR \cap (0,M) \times (0,N)$, and then continues onto $(Y-(1-\rho),Z+(1-\rho))$, see figure \ref{a12}. 

\begin{figure}[htbp]
\begin{center}
\includegraphics[scale = 1.2]{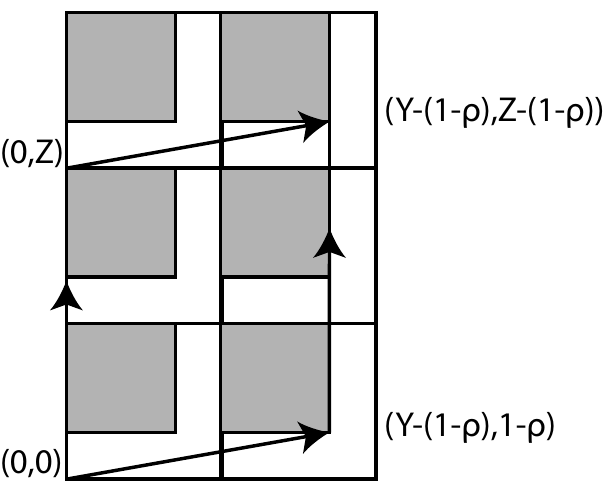}
\caption{Construction in Lemma \ref{novert}.}
\label{a12}
\end{center}
\end{figure}

Elementary geometric reasoning shows that the first two line segments of these curves share the same length, and that they both lie in $\Omega_{\text{w}}$. However, the latter curve contains a line segment parallel to the $y$-axis which is forbidden by Lemma \ref{BRTL} and therefore the curve cannot be minimal. 
\end{proof}

We can now identify potential geodesics by a pair of $k$-tuples. The length of each curve can then be described as a function of those $k$-tuples. One $k$-tuple records the distances $Z$ taken by applying Lemma \ref{TLBR}, the other $k$-tuple records the distances $Z$ from Lemma \ref{BRTL}. Since $(0,0) \in TL$ and $(M,N) \in TL$ and $TL$ connects to $BR$ which connects to $TL$ by Lemmas \ref{TLBR} and \ref{BRTL}, it suffices to record such $Z$ to describe the entire structure of the remaining curves.

\begin{Lemma}\label{len12}
The remaining candidate curves can be identified by $k$-tuples $(m_i)_{i=1}^{k}$, $(n_i)_{i=1}^k$ with  $\sum_{i=1}^k m_i = M$,$\sum_{i=1}^k n_i = N$. The length of a curve with such an identification is
\begin{equation}\label{lenfnl}
L\left[(m_i)_{i=1}^{k}, (n_i)_{i=1}^k  \right] = \sum_{i=1}^k \sqrt{(1-\rho)^2+(m_i - (1 - \rho))^2} + \sqrt{(1-\rho)^2+(n_i - (1 - \rho))^2}.
\end{equation}
Furthermore, $k \leq N$.
\end{Lemma}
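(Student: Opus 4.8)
The plan is to combine the structural results of Lemmas \ref{nopass}--\ref{novert} into a single combinatorial description of a geodesic, and then read off the length as a telescoping sum.

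First I would record what the earlier lemmas already give. By Lemmas \ref{nopass} and \ref{corner}, a geodesic $\gamma$ joining $(0,0)$ to $(M,N)$ is a finite concatenation of maximal line segments whose junction points lie in $TL\cup TR\cup BL\cup BR$; by Lemma \ref{novert} its image lies in $(0,M)\times(0,N)$ except for the two endpoints, and both endpoints lie in $TL$. Lemma \ref{TLBR} shows that a maximal segment leaving a $TL$ corner terminates at a $BR$ corner, and Lemma \ref{BRTL} shows that a maximal segment leaving a $BR$ corner terminates at a $TL$ corner. The three cases in the proof of Lemma \ref{TLBR} (excluding $\theta\in(\pi/2,2\pi)$, then $\theta\in\{0,\pi/2\}$, then termination at a $BL$ corner) use only that $\gamma$ is interior away from its endpoints, hence apply verbatim to the initial segment leaving $(0,0)\in TL$ and the final segment arriving at $(M,N)\in TL$. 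Therefore no segment of $\gamma$ starts or ends at a $TR$ or $BL$ corner, the segments strictly alternate in type, and the corners visited by $\gamma$ form an alternating list
\begin{equation*}
(0,0)=P_0\in TL,\ Q_1\in BR,\ P_1\in TL,\ Q_2\in BR,\ \dots,\ Q_k\in BR,\ P_k=(M,N)\in TL
\end{equation*}
for some $k\ge 1$; the curve $\gamma$ is completely determined by this list.

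Next I would extract the two $k$-tuples. By Lemma \ref{TLBR} the segment $P_{i-1}\to Q_i$ has displacement $\bigl(m_i-(1-\rho),\,1-\rho\bigr)$ for an integer $m_i\ge 1$, hence Euclidean length $\sqrt{(1-\rho)^2+(m_i-(1-\rho))^2}$; by Lemma \ref{BRTL} the segment $Q_i\to P_i$ has displacement $\bigl(1-\rho,\,n_i-(1-\rho)\bigr)$ for an integer $n_i\ge 1$, hence length $\sqrt{(1-\rho)^2+(n_i-(1-\rho))^2}$. Summing the displacements along the whole path, the $k(1-\rho)$ contributions cancel and the total displacement equals $\bigl(\sum_{i=1}^k m_i,\ \sum_{i=1}^k n_i\bigr)$, which must be $(M,N)$; this yields $\sum_{i=1}^k m_i=M$ and $\sum_{i=1}^k n_i=N$. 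Adding the segment lengths gives exactly \eqref{lenfnl}; note that since $\gamma\subset\Omega_{\text{w}}$ there is no $\beta$-weighting, so the value of the functional coincides with Euclidean length. Finally, since each $n_i$ is a positive integer, $N=\sum_{i=1}^k n_i\ge k$, that is, $k\le N$.

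The only genuinely delicate point is the first paragraph: the boundary points $(0,0)$ and $(M,N)$ fall outside the index ranges stated in Lemmas \ref{TLBR} and \ref{BRTL}, so one must check that the first and last segments are nonetheless forced into the $TL\to BR$ and $BR\to TL$ patterns. As indicated, this works because Lemma \ref{novert} pushes the rest of $\gamma$ into the open rectangle, so the relevant exclusion arguments from the proof of Lemma \ref{TLBR} carry over unchanged. Everything else is bookkeeping, together with the trivial remark that $k\ge 1$ (a single segment from $(0,0)\in TL$ cannot terminate at $(M,N)\in TL$).
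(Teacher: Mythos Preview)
Your proposal is correct and follows essentially the same approach as the paper: both invoke Lemmas \ref{nopass}--\ref{novert} to force the alternating $TL\to BR\to TL$ structure, record the integer jumps $m_i,n_i$, sum the segment lengths to obtain \eqref{lenfnl}, and use $n_i\ge 1$ to get $k\le N$. Your write-up is in fact somewhat more explicit than the paper's about the endpoint issue (the paper simply says ``by Lemma \ref{novert} and reasoning as in Lemma \ref{TLBR}'') and about the cancellation of the $(1-\rho)$ contributions in the total displacement, but these are presentational refinements rather than a different argument.
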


\begin{proof}
Starting at $(0,0)$, by Lemma \ref{novert} and reasoning as in Lemma \ref{TLBR} the candidate geodesic must extend as a line segment joining a point of the form $  ( Z_1 - (1- \rho),1-\rho) \in BR \cap (0,M) \times (0,N)$ for some $Z_1 \in \{1,...,M\}$. This produces a length contribution of $\sqrt{(1-\rho)^2+(m_1 - (1 - \rho))^2}$, where $m_1 := Z_1$. Since $(M,N) \in TL$, the curve has not yet reached the end point. Therefore, applying Lemma \ref{BRTL}, the candidate geodesic continues as another line segment, connecting to $(m_1,Z_2) \in TL \cap (0,M] \times (0,N]$ for $Z_2 \in \{1,...,N\}$. The contribution to length is $\sqrt{(1-\rho)^2+(n_1 - (1 - \rho))^2}$, where $n_1: = Z_2$. Now, either $(m_1,n_1) = (M,N)$, in which case we terminate the procedure, or otherwise we may find $m_2 \in \{1,...,M-m_1\}$ and $n_2 \in \{1,...,N-n_1\}$, and so on until $\sum_{i=1}^k m_i = M$, $\sum_{i=1}^k n_i = N$. The procedure obviously ends after $k \leq N$ steps, otherwise we would contradict Lemma \ref{novert}.
\end{proof}

The results of this section have demonstrated that a geodesic is reduced to minimising \eqref{lenfnl} over $k$-tuples in
\begin{equation}\label{w3}
\left\{ (m_i)_{i=1}^k, (n_i)_{i=1}^k \in \mathbb N^k \colon k \leq N, \sum_{i=1}^k m_i = M,  \sum_{i=1}^k n_i = N \right\}. 
\end{equation}

Clearly this finite dimensional minimisation problem has a solution.

\subsection{Minimisation of the length functional}

This subsection is dedicated to the calculation of minima for \eqref{lenfnl} over $k$-tuples in \eqref{w3}. For notational convenience set
\begin{equation}\label{lengthhh}
\ell_{\rho}(x) := \sqrt{(1-\rho)^2+(x - (1 - \rho))^2}.
\end{equation}
To perform this minimisation, we first minimise \eqref{lenfnl} for fixed $k$ and then minimise over $k$. Lemmas \ref{mono2}, \ref{min1} and \ref{min2} are technical results to minimise \eqref{lenfnl} for fixed $k$. Denote by $\floor{\cdot}$ the floor function.

\begin{Lemma}\label{mono2}
For $x \in [1,\infty)$ and $\rho \in (\tfrac{1}{2},1)$, $\ell_{\rho}$ is strictly monotone increasing.
\end{Lemma}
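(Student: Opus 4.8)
The plan is to show directly that the function $\ell_{\rho}(x) = \sqrt{(1-\rho)^2 + (x-(1-\rho))^2}$ is strictly increasing on $[1,\infty)$ by differentiating. Writing $c := 1-\rho$, we have $c \in (0,\tfrac12)$ since $\rho \in (\tfrac12,1)$, and $\ell_\rho(x) = \sqrt{c^2 + (x-c)^2}$. Differentiating, $\ell_\rho'(x) = \dfrac{x-c}{\sqrt{c^2 + (x-c)^2}}$, and the denominator is strictly positive for all $x$ (it is at least $c > 0$), so the sign of $\ell_\rho'(x)$ is exactly the sign of $x - c$. For $x \in [1,\infty)$ we have $x \geq 1 > \tfrac12 > c$, hence $x - c > 0$ and therefore $\ell_\rho'(x) > 0$ throughout $[1,\infty)$.

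Since $\ell_\rho$ is continuous on $[1,\infty)$ and has strictly positive derivative on $(1,\infty)$, it is strictly monotone increasing there. The only input that is actually used is the bound $c = 1-\rho < \tfrac12 \leq 1 \leq x$, which guarantees $x - c$ stays bounded away from $0$ and positive; note that in fact the conclusion holds on the larger interval $[c,\infty)$, but the statement as given on $[1,\infty)$ is all that is needed in the sequel.

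There is no real obstacle here: the only thing to be careful about is confirming that the square-root denominator never vanishes, which is immediate because $c^2 + (x-c)^2 \geq c^2 > 0$. I would simply record the derivative computation and the sign analysis; the argument is a one-line calculus fact once the substitution $c = 1-\rho$ is made and the range of $c$ is noted.
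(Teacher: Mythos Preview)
Your proof is correct and is precisely the trivial calculus exercise the paper alludes to: compute $\ell_\rho'(x) = (x-(1-\rho))/\ell_\rho(x)$ and note it is positive for $x \ge 1 > 1-\rho$. There is nothing to add.
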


\begin{proof}
A trivial calculus exercise.
\end{proof}

Lemmas \ref{min1} and \ref{min2} show that, for fixed $k$, \eqref{lenfnl} is minimised by distributing the values of the $k$-tuple equally. Note that the conditions of Lemma \ref{ineq5} ensure that $|z_1-z_2| \geq 2$.

\begin{Lemma}\label{ineq5}\label{min1}
For $z_1, z_2 \in \mathbb N$, with $2|(z_1+z_2)$, $z_1 \neq z_2$,
\begin{equation}\label{ineq1}
\ell_{\rho}(z_1) + \ell_{\rho}(z_2) > 2 \,\ell_{\rho}\left(\frac{z_1+z_2}{2}\right).
\end{equation}
\end{Lemma}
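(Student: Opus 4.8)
The plan is to prove the strict convexity-type inequality $\ell_\rho(z_1)+\ell_\rho(z_2) > 2\ell_\rho\!\left(\tfrac{z_1+z_2}{2}\right)$ by showing that $\ell_\rho$ is \emph{strictly convex} on $[1,\infty)$ and then invoking the midpoint version of Jensen's inequality; since $z_1\neq z_2$ (and, by parity, $|z_1-z_2|\ge 2$, so both points lie in the domain where we have control), the inequality is strict. The function $\ell_\rho(x)=\sqrt{(1-\rho)^2+(x-(1-\rho))^2}$ is, up to a horizontal shift, the norm-type function $t\mapsto\sqrt{c^2+t^2}$ with $c=1-\rho>0$ and $t=x-(1-\rho)$, which is a classical example of a smooth strictly convex function.

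Concretely, first I would write $f(t):=\sqrt{c^2+t^2}$ with $c=1-\rho\in(0,\tfrac12)$, so that $\ell_\rho(x)=f(x-(1-\rho))$. A direct computation gives $f''(t)=c^2(c^2+t^2)^{-3/2}>0$ for all $t\in\mathbb R$, hence $f$ is strictly convex on all of $\mathbb R$; therefore $\ell_\rho$ is strictly convex on $[1,\infty)$ (indeed everywhere). Strict convexity means that for any two distinct points $x_1\neq x_2$ in the domain, $\tfrac12\ell_\rho(x_1)+\tfrac12\ell_\rho(x_2) > \ell_\rho\!\left(\tfrac{x_1+x_2}{2}\right)$. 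Applying this with $x_1=z_1$, $x_2=z_2$ (distinct by hypothesis, and both $\ge 1$) and multiplying by $2$ yields exactly \eqref{ineq1}. The parity condition $2\mid(z_1+z_2)$ is only needed to guarantee that $\tfrac{z_1+z_2}{2}$ is itself an integer, which matters for the later application of the lemma but not for the inequality itself.

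I expect no genuine obstacle here: the only thing to be careful about is to state clearly that strict convexity on an interval gives \emph{strict} midpoint inequality for distinct endpoints (a standard fact, provable directly by integrating $f''>0$ twice, or by noting the second difference quotient is positive), and to confirm that $z_1,z_2\ge 1$ so both lie in the region where monotonicity (Lemma~\ref{mono2}) and convexity hold. If one prefers to avoid citing convexity abstractly, an equivalent elementary route is to square both sides after isolating: it suffices to show $\ell_\rho(z_1)^2+\ell_\rho(z_2)^2+2\ell_\rho(z_1)\ell_\rho(z_2) > 4\,\ell_\rho\!\left(\tfrac{z_1+z_2}{2}\right)^2$, and since $\ell_\rho(z_1)^2+\ell_\rho(z_2)^2 - 2\ell_\rho\!\left(\tfrac{z_1+z_2}{2}\right)^2 = \tfrac12(z_1-z_2)^2 \geq 2$, this reduces to the Cauchy--Schwarz-type bound $\ell_\rho(z_1)\ell_\rho(z_2) \geq \ell_\rho\!\left(\tfrac{z_1+z_2}{2}\right)^2$, which follows from expanding and again using $(z_1-z_2)^2>0$. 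Either way the computation is short; I would present the convexity argument as the cleaner one.
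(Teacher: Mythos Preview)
Your primary argument---compute $\ell_\rho''>0$ to deduce strict convexity, then apply the strict midpoint Jensen inequality---is correct and is in substance the paper's own proof. The paper simply writes out the midpoint inequality by hand: it rewrites \eqref{ineq1} via the fundamental theorem of calculus as a difference of two integrals of $\ell_\rho'$ over intervals of equal length, then uses the strict monotonicity of $\ell_\rho'$ (i.e., exactly $\ell_\rho''>0$) to compare them. So the two arguments are the same idea at different levels of abstraction.

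One genuine error, though, in your parenthetical ``elementary'' alternative: the claimed bound $\ell_\rho(z_1)\ell_\rho(z_2) \ge \ell_\rho\!\left(\tfrac{z_1+z_2}{2}\right)^2$ is \emph{false}. Writing $c=1-\rho$, $a=z_1-c$, $b=z_2-c$, $m=(a+b)/2$, one finds
\[
(c^2+a^2)(c^2+b^2)-(c^2+m^2)^2=\frac{(a-b)^2}{4}\bigl(2c^2-m^2-ab\bigr),
\]
and since $a,b>\rho>\tfrac12$ while $c<\tfrac12$, the bracket is negative, so the product is strictly \emph{less} than the square of the midpoint value (indeed, in the limiting case $c=0$ this is just AM--GM in the wrong direction). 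Thus that route, as written, does not close; you should drop it and keep the convexity proof.
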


\begin{proof}
Suppose without loss of generality that $z_1 \geq (z_1+z_2)/2$ and $z_2 \leq (z_1+z_2)/2$. Observe that by the fundamental theorem of calculus \eqref{ineq1} holds if and only if 
\begin{equation}\label{ineq2}
\int_{({z_1+z_2})/{2}}^{z_1} \frac{d\ell_{\rho}}{dx}(x) \d x  - \int^{({z_1+z_2})/{2}}_{z_2} \frac{d\ell_{\rho}}{dx}(x) \d x > 0.
\end{equation}
An elementary calculation shows that
\begin{equation}\label{mono1}
\frac{d^2\ell_{\rho}}{dx^2}(x)= \frac{(1-\rho)^2}{\sqrt{(1-\rho)^2+(x - (1 - \rho))^2}} > 0,
\end{equation}
for $x \in [1,\infty)$. Thus, by strict monotonicity,
\begin{align*}
\int_{({z_1+z_2})/{2}}^{z_1}\frac{d\ell_{\rho}}{dx}(x) \d x & > \left( \frac{z_1-z_2}{2} \right) \frac{d\ell_{\rho}}{dx}\left(\frac{z_1+z_2}{2}\right), \\
\int^{({z_1+z_2})/{2}}_{z_2} \frac{d\ell_{\rho}}{dx}(x) \d x & < \left( \frac{z_1-z_2}{2} \right) \frac{d\ell_{\rho}}{dx}\left(\frac{z_1+z_2}{2}\right).
\end{align*}
Hence, \eqref{ineq2} and therefore \eqref{ineq1} holds.
\end{proof}

\begin{Lemma}\label{min2}
For $z_1,z_2 \in \mathbb N$, with $2 \nmid (z_1+z_2)$, $|z_1 - z_2| \geq 2$,
\begin{equation}\label{ineq4}
\ell_{\rho}(z_1) + \ell_{\rho}(z_2)  > \ell_{\rho}\left(\floor{\frac{z_1+z_2}{2}} \right)  + \ell_{\rho}\left(\floor{\frac{z_1+z_2}{2}} +1 \right).
\end{equation}
\end{Lemma}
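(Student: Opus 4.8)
The plan is to adapt the proof of Lemma~\ref{min1} almost verbatim, replacing the integer midpoint by the half-integer $p := (z_1+z_2)/2$ as the point at which the derivative of $\ell_\rho$ is frozen. First I would settle the elementary bookkeeping. Assume without loss of generality that $z_1 > z_2$. Since $z_1+z_2$ is odd, $z_1-z_2$ has the same parity and is therefore odd, so the hypothesis $|z_1-z_2|\geq 2$ in fact forces $z_1-z_2\geq 3$. Writing $a := \floor{(z_1+z_2)/2} = p-\tfrac12$ and $b := a+1 = p+\tfrac12$, one checks directly that
\[
z_1 - b \;=\; a - z_2 \;=\; \tfrac12\bigl(z_1-z_2-1\bigr) \;=:\; \delta,
\]
and $\delta \geq 1$ by the parity observation; in particular $\delta > 0$. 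Moreover $z_1,z_2\in\mathbb N$ implies $1 \leq z_2 \leq a < b \leq z_1$, so all four points lie in $[1,\infty)$ and \eqref{mono1} is available on the whole range.

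Next, by the fundamental theorem of calculus, \eqref{ineq4} is equivalent to
\[
\int_{b}^{z_1} \frac{d\ell_{\rho}}{dx}(x)\,\d x \;>\; \int_{z_2}^{a} \frac{d\ell_{\rho}}{dx}(x)\,\d x,
\]
because $\ell_\rho(z_1)-\ell_\rho(b)$ equals the left-hand integral, $\ell_\rho(a)-\ell_\rho(z_2)$ equals the right-hand integral, and the two intervals $[b,z_1]$ and $[z_2,a]$ have the common length $\delta$. Then I would invoke strict monotonicity of $\frac{d\ell_{\rho}}{dx}$, which holds since \eqref{mono1} gives $\frac{d^2\ell_{\rho}}{dx^2} > 0$ on $[1,\infty)$. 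On $[b,z_1]$ one has $x \geq b > p$, hence $\frac{d\ell_{\rho}}{dx}(x) > \frac{d\ell_{\rho}}{dx}(p)$, and since $\delta>0$,
\[
\int_{b}^{z_1} \frac{d\ell_{\rho}}{dx}(x)\,\d x \;>\; \delta\,\frac{d\ell_{\rho}}{dx}(p).
\]
On $[z_2,a]$ one has $x \leq a < p$, hence $\frac{d\ell_{\rho}}{dx}(x) < \frac{d\ell_{\rho}}{dx}(p)$, so $\int_{z_2}^{a} \frac{d\ell_{\rho}}{dx}(x)\,\d x < \delta\,\frac{d\ell_{\rho}}{dx}(p)$. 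Chaining these two estimates gives the displayed strict inequality, and therefore \eqref{ineq4}.

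I do not expect a genuine obstacle here: the analytic content is identical to that of Lemma~\ref{min1}. The one point that needs care is the parity argument showing that the hypotheses force $z_1-z_2\geq 3$ rather than merely $z_1-z_2\geq 2$; this is exactly what keeps $\delta\geq 1>0$ and renders the inequality strict. (Were $|z_1-z_2|=1$ allowed, one would have $z_1=b$, $z_2=a$ and \eqref{ineq4} would degenerate to an equality, which is why that case is excluded in the statement.) A secondary routine check is that all points remain in $[1,\infty)$ so that the convexity statement \eqref{mono1} applies, which is immediate from $z_1,z_2\in\mathbb N$.
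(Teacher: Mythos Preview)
Your proof is correct and follows essentially the same approach as the paper: rewrite \eqref{ineq4} via the fundamental theorem of calculus as a comparison of two integrals of $\ell_\rho'$ over intervals of equal length, and then invoke the strict convexity \eqref{mono1}. The only cosmetic difference is that the paper freezes the derivative at the two integer endpoints $\floor{(z_1+z_2)/2}$ and $\floor{(z_1+z_2)/2}+1$ separately and then compares them, whereas you freeze both at the half-integer midpoint $p$; your version is a touch cleaner because it makes the equal-length observation $z_1-b=a-z_2=\delta$ explicit and sidesteps any bookkeeping with the constants $C_1,C_2$.
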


\begin{proof}
Suppose without loss of generality that $z_1 > z_2$. First consider the case when 
\begin{align*}
C_1(z_1,z_2) &:= z_1 - \floor{\frac{z_1+z_2}{2}}+1 > 0,\\
C_2(z_1,z_2) &:= \floor{\frac{z_1+z_2}{2}} - z_2 > 0.
\end{align*}
Observe that \eqref{ineq4} holds if, and only if, 
\begin{equation*}
\int_{\floor{({z_1+z_2})/{2}}+1}^{z_1}\frac{d\ell_{\rho}}{dx}(x) \d x  - \int^{\floor{({z_1+z_2})/{2}}}_{z_2} \frac{d\ell_{\rho}}{dx}(x) \d x > 0.
\end{equation*}
Then, by strict monotonicity, using \eqref{mono1},
\begin{align*}
\int_{\floor{({z_1+z_2})/{2}}+1}^{z_1} \frac{d\ell_{\rho}}{dx}(x) \d x & > C_1(z_1,z_2)\frac{d\ell_{\rho}}{dx}\left(\floor{\frac{z_1+z_2}{2}}+1 \right), \\
\int^{\floor{({z_1+z_2})/{2}}}_{z_2} \frac{d\ell_{\rho}}{dx}(x) \d x & < C_2(z_1,z_2)\frac{d\ell_{\rho}}{dx}\left(\floor{\frac{z_1+z_2}{2}} \right).
\end{align*}
The claim follows once we have shown that
\begin{equation}\label{ineq3}
C_1(z_1,z_2)\frac{d\ell_{\rho}}{dx}\left(\floor{\frac{z_1+z_2}{2}}+1 \right) - C_2(z_1,z_2)\frac{d\ell_{\rho}}{dx}\left(\floor{\frac{z_1+z_2}{2}} \right)> 0.
\end{equation}
By monotonicity, from \eqref{mono1}, the left hand side of \eqref{ineq3} is strictly greater than
\begin{equation*}
(C_1(z_1,z_2)-C_2(z_1,z_2))\frac{d\ell_{\rho}}{dx}\left(\floor{\frac{z_1+z_2}{2}} \right).
\end{equation*}
Since $C_1(z_1,z_2) - C_2(z_1,z_2) = z_1+z_2 - 2\floor{(z_1+z_2)/2} + 1 > 1$ and $\floor{(z_1+z_2)/2} \geq 1$, it follows that \eqref{ineq4} holds. 

The case $C_1(z_1,z_2) = C_2(z_1,z_2) = 0$ is impossible by our assumption that $|z_1-z_2| \geq 2$. Since $2 \nmid (z_1+z_2)$ the cases $C_1(z_1,z_2) = 0, C_2(z_1,z_2) \neq 0$ and $C_2(z_1,z_2) = 0, C_1(z_1,z_2) \neq 0$ also do not arise.
\end{proof}

We now minimise \eqref{lenfnl} over \eqref{w3} subject to $k \leq N$ fixed. 

\begin{Lemma}\label{soln1}
Let $1 \leq k \leq N$, then we can write $M = \ell_1k+s_1$, $N=\ell_2k+s_2$ for $\ell_i, s_i \in \mathbb N$. Then a pair of $k$-tuples $(m_i)_{i=1}^k, (n_i)_{i=1}^k \in \mathbb N^k$ that minimises the functional
\begin{equation*}
\sum_{i=1}^k \ell_{\rho}(m_i) +\ell_{\rho}(n_i)
\end{equation*}
subject to
\begin{equation}\label{cons1}
\sum_{i=1}^k m_i = M, \qquad \sum_{i=1}^k n_i = N
\end{equation}
takes the form $m_i = \ell_1$ for $k-s_1$ terms, $m_i = \ell_1+1$ for $s_1$ terms, $n_i = \ell_2$ for $k-s_2$ terms and $n_i = \ell_2+1$ for $s_2$ terms.
Furthermore, this solution is unique, up to permutations.
\end{Lemma}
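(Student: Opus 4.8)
The plan is to reduce the joint minimisation over the two $k$-tuples to two independent scalar problems, since the functional $\sum_{i=1}^k \ell_{\rho}(m_i) + \ell_{\rho}(n_i)$ separates additively and the constraints \eqref{cons1} couple only the $m_i$ among themselves and only the $n_i$ among themselves. Thus it suffices to prove the following: among all $k$-tuples $(m_i)_{i=1}^k \in \mathbb N^k$ with $\sum m_i = M$, the sum $\sum_i \ell_{\rho}(m_i)$ is minimised (uniquely up to permutation) by the ``balanced'' tuple consisting of $k-s_1$ copies of $\ell_1$ and $s_1$ copies of $\ell_1+1$, where $M = \ell_1 k + s_1$ with $0 \leq s_1 < k$; the argument for $(n_i)$ is identical. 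Note $\ell_1 \geq 1$ since $M \geq k$, so all entries lie in $[1,\infty)$ and Lemmas \ref{mono2}, \ref{min1}, \ref{min2} apply.

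For the scalar problem I would argue as follows. The feasible set is finite and nonempty, so a minimiser $(m_i)$ exists; fix one. I claim that in any minimiser, the entries differ pairwise by at most $1$. Suppose not: then there exist indices with $m_a - m_b \geq 2$. If $m_a + m_b$ is even, replace both by $(m_a+m_b)/2$; by Lemma \ref{min1} this strictly decreases the objective while preserving the constraint --- contradiction. If $m_a + m_b$ is odd, replace them by $\floor{(m_a+m_b)/2}$ and $\floor{(m_a+m_b)/2}+1$; since $m_a - m_b \geq 2$ (indeed $\geq 3$ here, but $\geq 2$ suffices for the hypothesis of Lemma \ref{min2}) and the new entries are still at least $1$, Lemma \ref{min2} gives a strict decrease, again a contradiction. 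Hence every minimiser has all entries in $\{p, p+1\}$ for some $p \in \mathbb N$; say it has $q$ entries equal to $p+1$ and $k-q$ equal to $p$, so $M = pk + q$. Because the representation $M = \ell_1 k + s_1$ with $0 \leq s_1 < k$ is unique (Euclidean division), and $0 \leq q \leq k$ with the case $q=k$ being the same tuple as $q=0$ with $p$ replaced by $p+1$, we get $p = \ell_1$ and $q = s_1$ (choosing the normalisation $0 \le q < k$). This identifies the minimiser and simultaneously establishes uniqueness up to permutation, since any minimiser must have exactly this multiset of entries.

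The main obstacle is making the ``local swap'' reduction rigorous at the level of \emph{every} minimiser rather than just exhibiting one candidate: one must be careful that Lemmas \ref{min1} and \ref{min2} have hypotheses ($2 \mid (z_1+z_2)$ with $z_1 \neq z_2$; or $2 \nmid (z_1+z_2)$ with $|z_1-z_2| \geq 2$) that are exactly met by the two cases above, and that after the swap the replaced entries remain in $[1,\infty)$ so the lemmas (which are stated only on that domain via \eqref{mono1}) still apply --- this holds since $\min(m_a,m_b) \geq 1$ forces the averaged values to be $\geq 1$. A secondary point to spell out is that the separation of the problem into the $m$-part and the $n$-part is legitimate: a pair of tuples is optimal for the sum if and only if each tuple is optimal for its own partial sum, which is immediate from additivity of the objective and independence of the constraints. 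Once these points are handled, combining them with the uniqueness of Euclidean division yields both the stated form of the minimiser and its uniqueness up to permutation.
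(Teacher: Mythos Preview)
Your proposal is correct and follows essentially the same argument as the paper: assume a minimiser has two entries differing by at least $2$, apply Lemma~\ref{min1} or Lemma~\ref{min2} according to the parity of their sum to obtain a strict decrease, and conclude via the uniqueness of Euclidean division. You are somewhat more explicit than the paper about the separation into independent $m$- and $n$-problems, the existence of a minimiser, and the fact that the swapped entries remain in $[1,\infty)$, but the core mechanism is identical.
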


\begin{proof}
Suppose, without loss of generality, that the $k$-tuple $(m_i)_{i=1}^k$ is not of the form $m_i = \ell_1$ for $k-s_1$ terms and $m_i = \ell_1+1$ for $s_1$ terms. Then by constraint \eqref{cons1}, there exists at least two terms of the $k$-tuple $m_1, m_2$ such that $|m_1 - m_2| \geq 2$. 

If $2 | (m_1+m_2)$, then by Lemma \ref{min1} it holds that
\begin{equation*}
\ell_{\rho}(m_1) + \ell_{\rho}(m_2) > 2 \, \ell_{\rho}\left(\frac{m_1+m_2}{2}\right),
\end{equation*}
contradicting the minimality of the proposed solution. Otherwise $2 \nmid (m_1+m_2)$, so that by Lemma \ref{min2} 
\begin{equation*}
\ell_{\rho}(m_1) + \ell_{\rho}(m_2)  > \ell_{\rho}\left(\floor{\frac{m_1+m_2}{2}}\right) + \ell_{\rho}\left(\floor{\frac{m_1+m_2}{2}} + 1\right),
\end{equation*}
again contradicting the minimality of the proposed solution. The uniqueness up to rearrangement of indices follows from the uniqueness of the representations $M = \ell_1k+s_1$, $N=\ell_2k+s_2$. Hence the result holds.
\end{proof}

With a minimiser for each $k$ found, it remains to minimise over $k$. To achieve this, it suffices to show that increasing $k$ strictly reduces length. Lemmas \ref{sing1} and \ref{sing2} show replacing the $k$-tuple with a $k+1$-tuple leads to a strict reduction in length.

\begin{Lemma}\label{sing1}
Let $z_1 \in \mathbb N$, suppose $2|z_1$ and $z_1 \geq 2$, then
\begin{equation}\label{sinineq1}
\ell_{\rho}(z_1) > 2 \, \ell_{\rho} \left(\frac{z_1}{2}\right).
\end{equation}
\end{Lemma}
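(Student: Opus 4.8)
The plan is to reduce \eqref{sinineq1} to an elementary polynomial inequality by squaring. Write $c := 1-\rho$, so that $c \in (0,\tfrac12)$ and $\ell_{\rho}(x) = \sqrt{c^2 + (x-c)^2}$. Both sides of \eqref{sinineq1} are strictly positive, hence the inequality is equivalent to its squared form
\begin{equation*}
c^2 + (z_1 - c)^2 > 4\left(c^2 + \left(\tfrac{z_1}{2} - c\right)^2\right) = 4c^2 + (z_1 - 2c)^2.
\end{equation*}
Expanding both sides and cancelling the common $z_1^2$, this is equivalent to $2cz_1 - 6c^2 > 0$, i.e.\ to $z_1 > 3c$. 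Since $z_1 \geq 2$ by hypothesis and $3c < \tfrac32 < 2$ because $c < \tfrac12$, the inequality holds, and the claim follows.

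I do not foresee any real obstacle. The only points requiring the slightest care are the legitimacy of squaring (immediate, since $\ell_{\rho} > 0$ everywhere) and the bound $3c < 2$, which is precisely where the standing assumption $\rho > \tfrac12$ enters. One could remark in passing that the evenness hypothesis on $z_1$ plays no role beyond guaranteeing $z_1/2 \in \mathbb N$, since the resulting inequality $z_1 > 3c$ already holds for every $z_1 \geq 1$.

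It is worth noting that Lemma \ref{sing1} is morally the degenerate case $z_2 \to 0$ of Lemma \ref{min1}; however, since $0 \notin \mathbb N$ that lemma cannot be invoked directly, so the short direct computation above is the cleanest route, and it is the one I would write out.
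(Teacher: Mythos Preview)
Your proof is correct and takes a genuinely different route from the paper's. The paper argues via the convexity of $\ell_\rho$: writing $z_1 = 2k$, it rewrites \eqref{sinineq1} as $\int_k^{2k} \ell_\rho'(x)\,\d x > \ell_\rho(k)$, bounds the integral below by $k\,\ell_\rho'(k)$ using $\ell_\rho'' > 0$, and then verifies directly that $k\,\ell_\rho'(k) - \ell_\rho(k) = (1-\rho)(k - 2(1-\rho))/\ell_\rho(k) > 0$ for $k \geq 1$. Your squaring argument is shorter and entirely elementary; the paper's approach, by contrast, reuses the same fundamental-theorem-plus-convexity machinery that drives Lemmas \ref{min1}, \ref{min2} and \ref{sing2}, so it buys uniformity of method at the cost of a little extra length.

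One small slip in your closing aside: the inequality $z_1 > 3c$ does \emph{not} hold for every $z_1 \geq 1$, since $3c$ ranges over $(0,\tfrac32)$ as $\rho$ ranges over $(\tfrac12,1)$; for $\rho$ close to $\tfrac12$ the case $z_1 = 1$ fails. What is true, and all you need, is that $z_1 > 3c$ holds for every real $z_1 \geq 2$.
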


\begin{proof}
Since $2|z_1$, write $z_1 = 2k$ for some $k \in \mathbb N$. Then, \eqref{sinineq1} is equivalent to showing that
\begin{equation}\label{cond1}
\int_{k}^{2k} \frac{d\ell_{\rho}}{dx}(x) \d x - \ell_{\rho}(k) > 0.
\end{equation}
By monotonicity, from \eqref{mono1}, we have that 
\begin{equation*}
\int_{k}^{2k} \frac{d\ell_{\rho}}{dx}(x) \d x - \ell_{\rho}(k) > k\frac{d\ell_{\rho}}{dx}(k) - \ell_{\rho}(k).
\end{equation*}
It is easy to verify that
\begin{equation}\label{linineq}
k\frac{d\ell_{\rho}}{dx}(k) - \ell_{\rho}(k) = \frac{(1-\rho)(k - 2(1-\rho))}{\sqrt{(1-\rho)^2+(k - (1 - \rho))^2}} = \frac{(1-\rho)(k - 2(1-\rho))}{\ell_{\rho}(k)}. 
\end{equation}
Furthermore, since $\ell_{\rho} > 0$, it holds that the right hand side of \eqref{linineq} is positive for $k \in \mathbb N$. Hence \eqref{cond1} holds.
\end{proof}

\begin{Lemma}\label{sing2}
Let $z_1 \in \mathbb N$, suppose $2 \nmid z_1$ and $z_1 \geq 2$, then
\begin{equation}\label{sinineq2}
\ell_{\rho}(z_1) > \\ \ell_{\rho}\left(\floor{\frac{z_1}{2}} \right) + \ell_{\rho}\left(\floor{\frac{z_1}{2}}+1 \right).
\end{equation}
\end{Lemma}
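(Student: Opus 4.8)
The plan is to imitate the proof of Lemma~\ref{sing1}, which handled the even case, using the convexity of $\ell_\rho$ recorded in \eqref{mono1} together with the algebraic identity already isolated in \eqref{linineq}. Since $z_1$ is odd and $z_1 \ge 2$, in fact $z_1 \ge 3$, so I would write $z_1 = 2k+1$ for some $k \in \mathbb{N}$; then $\floor{z_1/2} = k$ and $\floor{z_1/2}+1 = k+1$, so the claimed inequality \eqref{sinineq2} reads $\ell_{\rho}(2k+1) > \ell_{\rho}(k) + \ell_{\rho}(k+1)$.

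First I would move $\ell_{\rho}(k+1)$ to the left and apply the fundamental theorem of calculus, so that the inequality becomes $\int_{k+1}^{2k+1} \frac{d\ell_\rho}{dx}(x)\,dx > \ell_\rho(k)$. The interval of integration has length $k$, and by \eqref{mono1} the derivative $\frac{d\ell_\rho}{dx}$ is strictly increasing on $[1,\infty)$, so by strict monotonicity $\int_{k+1}^{2k+1} \frac{d\ell_\rho}{dx}(x)\,dx > k\,\frac{d\ell_\rho}{dx}(k+1) > k\,\frac{d\ell_\rho}{dx}(k)$. Next I would quote \eqref{linineq} from the proof of Lemma~\ref{sing1}, namely $k\,\frac{d\ell_\rho}{dx}(k) - \ell_\rho(k) = (1-\rho)(k - 2(1-\rho))/\ell_\rho(k)$; since $\rho \in (\tfrac{1}{2},1)$ gives $2(1-\rho) < 1 \le k$, while $\ell_\rho(k) > 0$, this quantity is strictly positive, hence $k\,\frac{d\ell_\rho}{dx}(k) > \ell_\rho(k)$. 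Chaining these inequalities yields $\int_{k+1}^{2k+1} \frac{d\ell_\rho}{dx}(x)\,dx > \ell_\rho(k)$, which is the desired statement.

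The argument is essentially routine; the only delicate point is the final strict positivity, which is precisely where the standing hypotheses $\rho > \tfrac{1}{2}$ (forcing $2(1-\rho) < 1$) and $z_1 \ge 2$ (forcing $k \ge 1$) are used — without them the inequality could degenerate to equality or fail outright. I do not anticipate any real obstacle beyond this bookkeeping, since all the analytic content (convexity of $\ell_\rho$ and the identity \eqref{linineq}) was already extracted in Lemmas~\ref{mono2} and \ref{sing1}.
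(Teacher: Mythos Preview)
Your proposal is correct and follows essentially the same route as the paper: write $z_1 = 2k+1$, reduce \eqref{sinineq2} via the fundamental theorem of calculus to $\int_{k+1}^{2k+1}\frac{d\ell_\rho}{dx}(x)\,dx > \ell_\rho(k)$, bound the integral below by $k\,\frac{d\ell_\rho}{dx}(k+1) > k\,\frac{d\ell_\rho}{dx}(k)$ using the convexity \eqref{mono1}, and finish by quoting \eqref{linineq}. Your write-up is in fact slightly more explicit than the paper's about why the right-hand side of \eqref{linineq} is strictly positive (namely $\rho>\tfrac12$ forces $2(1-\rho)<1\le k$), but the argument is otherwise identical.
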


\begin{proof}
Since $2 \nmid z_1$, write $z_1 = 2k + 1$ for some $k \in \mathbb N$. Then, \eqref{sinineq2} is equivalent to showing that,
\begin{equation*}
\int_{k+1}^{2k+1} \frac{d\ell_{\rho}}{dx}(x) \d x - \ell_{\rho}(k) > 0.
\end{equation*}
By monotonicity, from \eqref{mono1}, we have that
\begin{equation*}
\int_{k+1}^{2k+1} \frac{d\ell_{\rho}}{dx}(x) \d x - \ell_{\rho}(k) > k\frac{d\ell_{\rho}}{dx}(k+1) - \ell_{\rho}(k) > k\frac{d\ell_{\rho}}{dx}(k) - \ell_{\rho}(k).
\end{equation*}
Hence continuing from \eqref{linineq} in Lemma \ref{sing1} completes the proof.
\end{proof}

The following Lemma combines Lemmas \ref{sing1} and \ref{sing2} to show that the minimal $k+1$-tuples have total length strictly shorter than the minimal $k$-tuples.

\begin{Lemma}\label{split}
Let $(z_i)_{i=1}^k$ and $(\tilde z_i)_{i=1}^{k+1}$ be a $k$-tuple and $k+1$-tuple with $z_i$ being a placeholder for either $m_i$ or $n_i$ as in Lemma \ref{soln1}. Then
\begin{equation}
\sum_{i=1}^k \ell_{\rho}(z_i) > \sum_{i=1}^{k+1} \ell_{\rho}(\tilde z_i).
\end{equation}
\end{Lemma}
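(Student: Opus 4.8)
The plan is to take the minimal $k$-tuple $(z_i)_{i=1}^{k}$, locate an entry that is at least $2$, split that entry into two near-halves, and invoke Lemma~\ref{sing1} or Lemma~\ref{sing2} to see that this operation strictly decreases the total of $\ell_\rho$. The object produced is a $(k+1)$-tuple of positive integers with the same sum, so by the minimality clause of Lemma~\ref{soln1} its total is an upper bound for $\sum_{i=1}^{k+1}\ell_\rho(\tilde z_i)$; chaining the two inequalities then gives the claim.

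In more detail, I would write $S$ for the common constrained sum (so $S=M$ if $z_i$ stands for $m_i$, and $S=N$ if it stands for $n_i$). For the $(k+1)$-tuple $(\tilde z_i)$ to be admissible at all one needs $S \ge k+1$, since each $\tilde z_i \ge 1$; hence $\sum_{i=1}^{k} z_i = S \ge k+1 > k$, and by pigeonhole some index $j$ has $z_j \ge 2$. Put $p:=\floor{z_j/2}$ and $q:=z_j-p=\big\lceil z_j/2\big\rceil$, so $p,q\in\mathbb N$ and $p+q=z_j$. If $z_j$ is even, Lemma~\ref{sing1} gives $\ell_\rho(z_j) > 2\,\ell_\rho(z_j/2)=\ell_\rho(p)+\ell_\rho(q)$; if $z_j$ is odd then $z_j\ge 3$, so $p\ge 1$, and Lemma~\ref{sing2} gives $\ell_\rho(z_j) > \ell_\rho(\floor{z_j/2})+\ell_\rho(\floor{z_j/2}+1)=\ell_\rho(p)+\ell_\rho(q)$. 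In either case
\[
\ell_\rho(p)+\ell_\rho(q)+\sum_{i\neq j}\ell_\rho(z_i) < \sum_{i=1}^{k}\ell_\rho(z_i).
\]

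Finally, the tuple obtained from $(z_i)$ by deleting $z_j$ and inserting $p$ and $q$ is a $(k+1)$-tuple of natural numbers summing to $S$; by Lemma~\ref{soln1} the minimiser $(\tilde z_i)_{i=1}^{k+1}$ satisfies $\sum_{i=1}^{k+1}\ell_\rho(\tilde z_i) \le \ell_\rho(p)+\ell_\rho(q)+\sum_{i\neq j}\ell_\rho(z_i)$, and combining this with the displayed strict inequality yields $\sum_{i=1}^{k+1}\ell_\rho(\tilde z_i) < \sum_{i=1}^{k}\ell_\rho(z_i)$. There is no real obstacle here; the only points needing care are verifying the existence of an entry $\ge 2$ (this is exactly where admissibility of both tuples, i.e. $S\ge k+1$, is used) and checking that the odd/even split lands in the hypotheses of Lemmas~\ref{sing1} and~\ref{sing2} (in the odd case one must note $z_j\ge 3$). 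Passing from the explicit split tuple to the genuine minimal $(k+1)$-tuple via Lemma~\ref{soln1} is the step that closes the argument.
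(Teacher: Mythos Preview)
Your argument is correct and follows essentially the same route as the paper: split one entry $z_j\ge 2$ into near-halves, apply Lemma~\ref{sing1} or Lemma~\ref{sing2} to get a strictly shorter $(k+1)$-tuple, then invoke the minimality in Lemma~\ref{soln1} to pass to $(\tilde z_i)$. The only cosmetic difference is that you justify the existence of some $z_j\ge 2$ up front via the pigeonhole inequality $S\ge k+1$, whereas the paper handles the complementary case $z_i\equiv 1$ at the end by observing that it forces $k=N$ and hence no admissible $(k+1)$-tuple exists.
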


\begin{proof}
Suppose that there exists $j \in \{1, ... , k\}$ such that $z_j \geq 2$; without loss of generality assume $j = k$. Define a new $k+1$-tuple by $\hat z_i = z_i$ if $i \in \{1,...,k-1\}$. If $2 | z_j$ then set $\hat z_k = \hat z_{k+1} = z_j/2$, otherwise set $\hat z_k =  \floor{z_j}/2$ and  $\hat z_{k+1} =  \floor{z_j}/2+1$. Using Lemmas \ref{sing1} or \ref{sing2}, it holds that
\begin{equation*}
\sum_{i=1}^k \ell_{\rho}(z_i) > \sum_{i=1}^{k+1} \ell_{\rho}(\hat z_i).
\end{equation*}
Furthermore, since $\sum_{i=1}^{k+1} \hat z_i =  \sum_{i=1}^{k+1} z_i $, by the minimality of $(\tilde z_i)_{i=1}^{k+1}$ we have that
\begin{equation*}
\sum_{i=1}^{k+1} \ell_{\rho}(\hat z_i) \geq \sum_{i=1}^{k+1} \ell_{\rho}(\tilde z_i).
\end{equation*}
Now consider the case when $z_i \equiv 1$ for all $i$. This implies that $k = N$, by Lemma \ref{soln1}, and hence there is no such $k+1$-tuple.
\end{proof}

From Lemma \ref{split}, it is possible to compute $\min L$ explicitly, and the corresponding geodesic curves.

\begin{proposition}\label{len1}
The length of a geodesic joining $(0,0)$ to $(M,N)$ is
\begin{multline}\label{len_ncase}
\mathscr L_{\rho}(M,N) := N\ell_{\rho}(1) + \left(M - \floor{{M}/{N}}N \right)\ell_{\rho}\left(\floor{{M}/{N}} + 1 \right) \\+ \left(N - M + \floor{{M}/{N}}N \right)\ell_{\rho}(\floor{{M}/{N}}).
\end{multline}
\end{proposition}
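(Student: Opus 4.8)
The plan is to chain the reduction of Lemma~\ref{len12} with the two minimisation results Lemmas~\ref{soln1} and~\ref{split}, and then simply evaluate the resulting expression. By Lemma~\ref{len12}, the length of a geodesic equals the minimum of~\eqref{lenfnl} over the finite set~\eqref{w3}. Since $M>N$, every admissible $k$ satisfies $1\le k\le N$, and for each such $k$ one has $\lfloor M/k\rfloor\ge\lfloor M/N\rfloor\ge 1$ and $\lfloor N/k\rfloor\ge 1$, so all arguments of $\ell_\rho$ that occur lie in $[1,\infty)$ and the technical lemmas of the previous subsection apply.

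First I would fix $k$ and invoke Lemma~\ref{soln1}: writing $M=\ell_1k+s_1$, $N=\ell_2k+s_2$ as there, the minimum of~\eqref{lenfnl} over tuples of this fixed length $k$ equals
\[
F(k):=(k-s_1)\,\ell_\rho(\ell_1)+s_1\,\ell_\rho(\ell_1+1)+(k-s_2)\,\ell_\rho(\ell_2)+s_2\,\ell_\rho(\ell_2+1).
\]
Next I would apply Lemma~\ref{split} separately to the $m$-part and to the $n$-part to conclude that $F(k+1)<F(k)$ for $1\le k<N$: the minimal $(k+1)$-tuple with sum $M$ has strictly smaller $\ell_\rho$-sum than the minimal $k$-tuple with sum $M$, and likewise for $N$ (both $(k+1)$-tuples exist because $k+1\le N<M$). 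Iterating, the overall minimum over~\eqref{w3} is attained at the largest admissible value $k=N$, so the geodesic length is $F(N)$.

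Finally I would evaluate $F(N)$. At $k=N$ the constraint $\sum_{i=1}^N n_i=N$ with $n_i\in\mathbb N$ forces $n_i\equiv 1$, so $\ell_2=1$, $s_2=0$, and the $n$-contribution is $N\ell_\rho(1)$; and with $\ell_1=\lfloor M/N\rfloor$ and $s_1=M-\lfloor M/N\rfloor N\in\{0,\dots,N-1\}$, Lemma~\ref{soln1} gives $s_1=M-\lfloor M/N\rfloor N$ terms equal to $\lfloor M/N\rfloor+1$ and $N-s_1=N-M+\lfloor M/N\rfloor N$ terms equal to $\lfloor M/N\rfloor$. Summing these three contributions yields exactly~\eqref{len_ncase}; the geodesic curve itself is then read off from Lemmas~\ref{TLBR}, \ref{BRTL} and~\ref{len12}.

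The only point that needs a little care is the termination of the induction on $k$: one must note that $k=N$ is admissible (immediate) while no admissible tuple has $k>N$, so Lemma~\ref{split} applies at every step $k<N$ but cannot be invoked once more — this is precisely where the bound $k\le N$ from Lemma~\ref{len12} enters and stops the length from decreasing further, consistent with the final paragraph of the proof of Lemma~\ref{split}. Everything else is bookkeeping: translating the balanced-tuple multiplicities of Lemma~\ref{soln1} into the floor-function coefficients of~\eqref{len_ncase} and collecting terms.
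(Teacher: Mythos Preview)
Your argument is correct and follows the same route as the paper: reduce via Lemma~\ref{len12}, minimise at fixed $k$ using Lemma~\ref{soln1}, then use Lemma~\ref{split} to push $k$ up to $N$ and evaluate. You are in fact slightly more explicit than the paper about why the iteration in $k$ terminates exactly at $k=N$ (noting that both the $m$- and $n$-parts admit a $(k{+}1)$-tuple when $k<N$), whereas the paper compresses this into the single sentence ``by Lemma~\ref{split}, taking $k=N$ produces curves of minimal length''.
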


\begin{proof}
By Lemma \ref{split}, it is clear that taking $k = N$, with the corresponding $N$-tuple $(n_i)_{i=1}^{N}$ where $n_i = 1$ for all $i$ produces curves of minimal length. It follows that the corresponding $N$-tuple $(m_i)_{i=1}^{N}$ is also optimal. Writing $M = RN + S$, it holds that $m_i = R$ for $N-S$ terms and $m_i = R+1$ for $S$ terms. Hence, the minimal length is 
\begin{align*}
\mathscr L_{\rho}(M,N) &= N\ell_{\rho}(1) + S\ell_{\rho}(R+1) +  \left(N - S\right)\ell_{\rho}(R).
\end{align*}
Note that $S = M - \floor{{M}/{N}}N \text{ and } R =  \floor{{M}/{N}}$, which completes the proof.
\end{proof}

The curve of length \eqref{len_ncase} is not necessarily unique, as the following corollary shows.

\begin{corollary}
There are precisely $\binom{N}{M - \floor{M/N}N}$ geodesics joining $(0,0)$ to $(M,N)$.
\end{corollary}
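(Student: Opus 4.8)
The plan is to count, among the minimising curves identified by the preceding lemmas, exactly how many distinct geodesics there are. By Lemma~\ref{len12} every candidate geodesic is encoded by a pair of $k$-tuples $(m_i)_{i=1}^k,(n_i)_{i=1}^k\in\mathbb N^k$ with $\sum m_i=M$, $\sum n_i=N$ and $k\le N$, and by Proposition~\ref{len1} (together with Lemmas~\ref{soln1} and~\ref{split}) the minimal length is attained precisely when $k=N$ and the resulting $N$-tuples are the balanced ones. First I would argue that for $k=N$ the constraint $\sum n_i=N$ with $n_i\in\mathbb N=\{1,2,\dots\}$ forces $n_i=1$ for every $i$, so there is no freedom left in the $(n_i)$-tuple. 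Thus the only source of multiplicity is the $(m_i)$-tuple, which by Lemma~\ref{soln1} must consist of $\floor{M/N}$ repeated $N - (M-\floor{M/N}N)$ times and $\floor{M/N}+1$ repeated $S:=M-\floor{M/N}N$ times; any other $N$-tuple summing to $M$ is strictly longer.

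Next I would establish the correspondence between $N$-tuples and actual curves: by Lemmas~\ref{nopass}, \ref{corner}, \ref{TLBR}, \ref{BRTL} and~\ref{novert}, the sequence of corners through which the geodesic passes is determined completely and injectively by the ordered pair of $N$-tuples — the $i$-th pair $(m_i,n_i)$ records the horizontal/vertical displacements of the $i$-th "staircase step" from $TL$ to $BR$ to $TL$. Since the $(n_i)$ are all equal to $1$, two minimising curves coincide if and only if their $(m_i)$-tuples coincide as ordered tuples. Therefore the number of geodesics equals the number of distinct orderings of a word with $S$ letters equal to $\floor{M/N}+1$ and $N-S$ letters equal to $\floor{M/N}$, which is the multinomial coefficient $\binom{N}{S}=\binom{N}{M-\floor{M/N}N}$.

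Finally I would record the two boundary cases to confirm the formula is consistent: if $N\mid M$ then $S=0$ and $\binom{N}{0}=1$, matching the unique "regular staircase" geodesic, while in general $0\le S<N$ so the binomial coefficient is well defined and at least $1$. The only point requiring genuine care — the main obstacle — is justifying that distinct ordered $N$-tuples really do give geometrically distinct curves and that no minimiser with $k<N$ sneaks in: the first follows because the broken-line curve is literally reconstructed from the corner sequence via the explicit formulas of Lemmas~\ref{TLBR}--\ref{BRTL}, and the second follows from the strict inequalities in Lemma~\ref{split}, which show every $k<N$ configuration is strictly beaten by some $k=N$ configuration; I would state these two reductions explicitly and then the count is immediate.
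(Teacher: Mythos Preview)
Your proposal is correct and follows the same approach as the paper: the count comes from observing that $k=N$ forces $n_i\equiv 1$, so the only freedom is in the ordered $N$-tuple $(m_i)$, whose distinct permutations number $\binom{N}{M-\floor{M/N}N}$. The paper's own proof is a two-line remark that the $(m_i)$-tuple is unique only up to permutation; you supply the details it leaves implicit---that distinct ordered tuples yield geometrically distinct curves and that no $k<N$ minimiser exists---which is a reasonable elaboration but not a different argument.
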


\begin{proof}
The potential source of non-uniqueness stems from the fact that in Proposition \ref{len1}, the $N$-tuple $(m_i)_{i=1}^N$ is only unique up to a permutation. Hence the result follows. 
\end{proof}

The intuition behind this can be seen in figure \ref{a0}. It does not matter whether a geodesic first joins $TL$ to $BR$ over two squares and then the next connection $TL$ to $BR$ is one square, as can be seen in the figure. This non-uniqueness is reflected in the various permutations of $(m_i)_{i=1}^N$ that we can take.

The next subsection focuses on constructing a sequence of geodesics to compute the limit length.

\subsection{The $\ep$-scaled Riemannian length functional}\label{ss23}

The aim of this subsection is to compute a sequence of geodesics, denoted $\gamma_{\ep}$, for the scaled length functional \eqref{begin}. For the $\ep$-dependent problem we choose to compute geodesics joining
\begin{equation}\label{endpts}
\left( \ep \tfrac{1}{2}(1-\rho), -\ep \tfrac{1}{2}(1-\rho) \right) \text{ to } \left(M + \ep \tfrac{1}{2}(1-\rho), N- \ep \tfrac{1}{2}(1-\rho) \right)
\end{equation} for $(M,N) \in \mathbb N^2$ with $M > N$. 
As before, this is equivalent to computing geodesics joining $(0,0)$ to $(M,N)$ in the shifted length functional
\begin{equation}\label{aeshi}
\int_0^1A_{\rho}\left(\frac{\gamma(\tau)}{\ep}\right)\|\gamma'(\tau)\| \d\tau, \; \gamma \in W^{1,1}(0,1),
\end{equation}
where
\begin{equation*}
A_{\rho}(x,y) := a_{\rho}\left(x - \ep \tfrac{1}{2}(1-\rho), y - \ep \tfrac{1}{2}(1-\rho) \right).
\end{equation*}
For each $\ep > 0$, determining the minimal length of \eqref{aeshi} is an identical argument to when $\ep = 1$, except that all line segments are scaled by a factor $\ep$. Thus for a fixed $\ep$ that the length of a geodesic joining $(0,0)$ to $(\ep M, \ep N)$ in \eqref{aeshi} is $\ep L(M,N)$. Define $L_{\rho}^{\ep}(x,y)$ to be the length of a geodesic joining $(0,0)$ to $(x,y)$ in \eqref{aeshi}.

\begin{Lemma}\label{epk}
Let $(x,y) \in \mathbb Q^2$, $x > y > 0$, and suppose $x = p/q$, $y = r/s$. Then there exists a sequence $(\ep_k)_{k=1}^{\infty}$ with $\ep_k \rightarrow 0$ as $k \rightarrow \infty$ such that 
\begin{equation}
L_{\rho}^{\ep_k}(x,y) = L_{\rho}(x,y),
\end{equation}
where $L_{\rho}(x,y)$ is the extension of \eqref{len_ncase} to $\mathbb Q^2$.
\end{Lemma}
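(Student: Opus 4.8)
The idea is to choose the scale $\ep_k$ so that the endpoints in \eqref{endpts} land exactly on the lattice of corners of $\Omega_{\mathrm g}$, so that the $\ep_k$-scaled problem is, after rescaling, a copy of the integer problem already solved in Proposition \ref{len1}. Concretely, given $x = p/q$ and $y = r/s$ with $x > y > 0$, set $\ep_k := 1/(kqs)$ for $k \in \mathbb N$. Then $x/\ep_k = kps$ and $y/\ep_k = kqr$ are positive integers with $x/\ep_k > y/\ep_k$, and since the metric $A_\rho(\cdot/\ep_k)$ is just $A_\rho$ rescaled by $\ep_k$, a geodesic joining $(0,0)$ to $(x,y)$ in \eqref{aeshi} is obtained by rescaling by $\ep_k$ a geodesic joining $(0,0)$ to $(x/\ep_k,\, y/\ep_k) = (kps, kqr)$ in the unscaled functional \eqref{slen}. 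Hence $L_\rho^{\ep_k}(x,y) = \ep_k L(kps, kqr) = \ep_k \mathscr L_\rho(kps, kqr)$ by Proposition \ref{len1}. Clearly $\ep_k \to 0$.

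It then remains to show $\ep_k \mathscr L_\rho(kps, kqr) = L_\rho(x,y)$, i.e. that \eqref{len_ncase} scales correctly under $(M,N) \mapsto (cM, cN)$ for $c = kqs \in \mathbb N$, namely $\mathscr L_\rho(cM', cN') = c\, \mathscr L_\rho(M', N')$ when extended appropriately — but here one must be careful, because $\mathscr L_\rho$ as written is \emph{not} homogeneous of degree one: the terms $\ell_\rho(1)$ and $\ell_\rho(\lfloor M/N\rfloor)$ involve the fixed offset $1-\rho$ and $\ell_\rho$ is not linear. The point is rather that $L_\rho(x,y)$ on $\mathbb Q^2$ is \emph{defined} as the extension of \eqref{len_ncase}, and one should check that this extension is consistent, i.e. that plugging $(M,N) = (kps, kqr)$ into \eqref{len_ncase} and multiplying by $\ep_k = 1/(kqs)$ gives a value independent of $k$. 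Writing $kps = \lfloor (ps)/(qr)\rfloor \cdot kqr + k(ps \bmod qr)$ — note $\lfloor kps/kqr\rfloor = \lfloor ps/qr\rfloor$ — one sees the three coefficients $N$, $M - \lfloor M/N\rfloor N$, $N - M + \lfloor M/N\rfloor N$ each scale by the factor $k$, while the three arguments $1$, $\lfloor M/N\rfloor+1$, $\lfloor M/N\rfloor$ of $\ell_\rho$ are unchanged. Thus $\mathscr L_\rho(kps, kqr) = k\,\mathscr L_\rho(ps, qr)$, and $\ep_k \mathscr L_\rho(kps,kqr) = \mathscr L_\rho(ps,qr)/(qs) =: L_\rho(x,y)$, independent of $k$.

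The main obstacle is purely bookkeeping: verifying that $\lfloor kps/(kqr)\rfloor = \lfloor ps/(qr)\rfloor$ (immediate, since $kps/(kqr) = ps/(qr)$) and that the remainder term $kps - \lfloor ps/(qr)\rfloor kqr$ equals $k$ times $ps - \lfloor ps/(qr)\rfloor qr$, so that \eqref{len_ncase} is linear along integer rays through the origin. One also needs $kps > kqr$, which follows from $x > y$; and one should note in passing that the definition of $L_\rho(x,y)$ as this common value is well posed precisely because of this ray-linearity, which is consistent with any other representation $x = p'/q'$, $y = r'/s'$ since all such choices give the same real numbers $x,y$ and hence the same $\lfloor x/y\rfloor$ and the same products after clearing denominators. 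No genuinely hard estimate is involved — the content is entirely in the reduction to the already-solved integer case.
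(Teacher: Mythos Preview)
Your proposal is correct and follows essentially the same route as the paper: you choose $\ep_k = 1/(kqs)$, reduce the $\ep_k$-scaled problem to the integer problem $(M,N)=(kps,kqr)$ via rescaling, and then verify $\tfrac{1}{kqs}\mathscr L_\rho(kps,kqr)=L_\rho(x,y)$, which the paper dismisses as ``a trivial calculation''. Your write-up simply spells out that calculation (ray-linearity of \eqref{len_ncase} via invariance of $\lfloor M/N\rfloor$ and linear scaling of the coefficients), so there is no substantive difference.
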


\begin{proof}
Take $\ep_k = 1/kqs$, $M = kps$ and $N = kqr$. Then by elementary geometric reasoning
\begin{equation}
L_{\rho}^{\ep_k}(x,y) = \frac{1}{kqs} L_{\rho}(kps,krq).
\end{equation}
It also holds that $\frac{1}{kqs} L_{\rho}(kps,krq)  = L_{\rho}(x,y)$ (to show this is a trivial calculation) therefore the result holds.
\end{proof}

\section{The limit metric}\label{sec3}

In this section we compute the limit metric corresponding to the $\Gamma$-limit of the sequence of functionals \eqref{begin}. 
\begin{Lemma}
Let $(x,y) \in \mathbb Q^2$, $x > y > 0$, and suppose $x = p/q$, $y = r/s$. Then the limit metric takes the value
\begin{equation}
\psi_{\rho}(x,y) = L_{\rho}(x,y).
\end{equation}
\end{Lemma}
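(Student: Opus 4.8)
The plan is to use the abstract characterisation \eqref{asymp} of $\psi_\rho$ together with the explicit geodesic computation of Section \ref{sec2}. The key observation is that for rational $(x,y)$ the quantity $L_\rho(x,y)$ is realised exactly, not merely asymptotically, along the sequence $\ep_k \to 0$ from Lemma \ref{epk}: one has $L_\rho^{\ep_k}(x,y) = L_\rho(x,y)$ for all $k$. Since the infimum appearing in \eqref{asymp} is precisely $d_{\ep}\big((0,0),(x,y)\big)$ in the shifted problem, which by Proposition \ref{len1} and the scaling argument of subsection \ref{ss23} equals $L_\rho^{\ep}(x,y)$, the $\ep_k$-subsequence of the defining limit is constantly equal to $L_\rho(x,y)$. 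It therefore suffices to show that the full limit over $\ep \to 0$ exists; then its value is forced to be $L_\rho(x,y)$ by the subsequence.

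First I would make explicit that the infimum in \eqref{asymp}, for the original (unshifted) functional with boundary data $\gamma(0)=0$, $\gamma(1)=(x,y)$, coincides up to the harmless shift by $\ep\tfrac12(1-\rho)(1,-1)$ with $L_\rho^{\ep}(x,y)$: the translation moves the endpoints \eqref{endpts} by an $O(\ep)$ amount, and by the Lipschitz bound \eqref{euc} this changes the distance by at most $O(\ep)$, hence does not affect the limit. So the claim reduces to $\lim_{\ep\to 0} L_\rho^{\ep}(x,y) = L_\rho(x,y)$. Next I would establish the existence of this limit. Here I would invoke the general theory: by \cite{amar98a,buttazzo01a} the sequence \eqref{begin} does $\Gamma(L^1)$-converge to a functional of the form \eqref{fins} with $\psi_\rho$ convex, and \eqref{asymp} holds, so in particular the limit in \eqref{asymp} exists for every $\xi$. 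Given that the limit exists and a subsequence converges to $L_\rho(x,y)$, we conclude $\psi_\rho(x,y)=L_\rho(x,y)$.

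Alternatively — and this is the route I would prefer if one wants to be self-contained rather than quoting \eqref{asymp} wholesale — I would prove directly that $\ep \mapsto L_\rho^{\ep}(x,y)$ is "almost monotone"/Cauchy as $\ep\to 0$. The function $L_\rho^{\ep}$ is obtained by the same finite combinatorial minimisation \eqref{lenfnl}–\eqref{w3} applied after rescaling by $1/\ep$; for general $\ep$ the endpoint $(x/\ep, y/\ep)$ is not a lattice point, but one can round it to a nearby integer pair $(M_\ep, N_\ep)$ with $|(M_\ep,N_\ep) - (x/\ep,y/\ep)| \le \sqrt 2$, apply Proposition \ref{len1}, and pay a correction of size $O(\ep)$ from the triangle inequality \eqref{euc}. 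Then $L_\rho^{\ep}(x,y) = \ep\,\mathscr L_\rho(M_\ep,N_\ep) + O(\ep)$, and a direct estimate using the homogeneity-like identity $\tfrac1k \mathscr L_\rho(kM,kN) = \mathscr L_\rho(M,N)$ (already noted in the proof of Lemma \ref{epk}) together with continuity of $\ell_\rho$ shows $\ep\,\mathscr L_\rho(M_\ep,N_\ep) \to L_\rho(x,y)$ as $\ep\to 0$.

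The main obstacle is the second, self-contained route: controlling $\mathscr L_\rho(M_\ep,N_\ep)$ uniformly as $(M_\ep,N_\ep)$ ranges over the integer approximants of $(x/\ep,y/\ep)$, since the formula \eqref{len_ncase} involves $\lfloor M/N\rfloor$, which is not continuous — one must check that the jumps in the floor contribute only $O(\ep)$ after multiplication by $\ep$, using that $N_\ep \to \infty$ so that $M_\ep/N_\ep \to x/y$ and the floor stabilises except on a sparse set of $\ep$. If instead one is content to cite the existence of the limit in \eqref{asymp} from \cite{amar98a}, the argument collapses to the short subsequence observation above and there is essentially no obstacle; I would present that as the main proof and relegate the direct estimate to a remark.
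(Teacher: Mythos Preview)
Your main argument is correct and is essentially the paper's own proof: cite the general theory for the existence (equivalently, subsequence-independence) of the limit in \eqref{asymp}, use the triangle inequality with \eqref{euc} to absorb the $O(\ep)$ shift between the original and shifted endpoints, and then evaluate along the special sequence $(\ep_k)$ of Lemma~\ref{epk}. The paper cites \cite[Proposition~15.5]{braides98a} rather than \cite{amar98a} for the first step, and does not include your alternative self-contained route via rounding $(x/\ep,y/\ep)$ to integers, but otherwise the structure is identical.
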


\begin{proof}
By \eqref{asymp}
\begin{equation}
\psi_{\rho}(x,y) = \lim_{i \rightarrow \infty} L_{\rho}^{a,\ep_i}(x,y),
\end{equation}
where
\begin{equation}
L_{\rho}^{a,\ep}(x,y) = \inf \left\{ \int_0^1a_{\rho}\left(\frac{\gamma(\tau)}{\ep}\right)\|\gamma'(\tau)\| \d\tau \colon \gamma \in W^{1,1}(0,1), \gamma(0)= (0,0), \gamma(1) = (x,y) \right\}.
\end{equation}
Furthermore, the limit is independent of the choice of $(\ep_i)_{i=1}^{\infty}$ where $\ep_i \rightarrow 0$ as $i \rightarrow \infty$ by \cite[Proposition 15.5]{braides98a}. Using the triangle inequality for \eqref{dist} and \eqref{euc} we find
\begin{multline*}
\left| d_{\ep}((0,0),(x,y)) - d_{\ep}\left((- \ep \tfrac{1}{2}(1-\rho),- \ep \tfrac{1}{2}(1-\rho)),(x- \ep \tfrac{1}{2}(1-\rho),y- \ep \tfrac{1}{2}(1-\rho)) \right) \right| \\ \leq d_{\ep}((0,0),(- \ep \tfrac{1}{2}(1-\rho),- \ep \tfrac{1}{2}(1-\rho))) + d_{\ep}((x,y),(x- \ep \tfrac{1}{2}(1-\rho),y- \ep \tfrac{1}{2}(1-\rho)))\\
\leq C\ep
\end{multline*}
By definition \begin{align*}
d_{\ep}((0,0),(x,y)) & = L_{\rho}^{a,\ep}(x,y),\\
d_{\ep}\left((- \ep \tfrac{1}{2}(1-\rho),- \ep \tfrac{1}{2}(1-\rho)),(x- \ep \tfrac{1}{2}(1-\rho),y- \ep \tfrac{1}{2}(1-\rho)\right) & = L_{\rho}^{\ep}(x,y),
\end{align*}
So the last estimate implies $\lim_{\ep \rightarrow 0} L_{\rho}^{a,\ep}(x,y) = \lim_{\ep \rightarrow 0} L_{\rho}^{\ep}(x,y)$. Therefore, by taking $(\ep_k)_{k=1}^{\infty}$ as in Lemma \ref{epk} it holds that
\begin{equation*}
\psi_{\rho}(x,y) = \lim_{k \rightarrow \infty} L_{\rho}^{a,\ep_k}(x,y) =  \lim_{k \rightarrow \infty} L_{\rho}^{\ep_k}(x,y) = L_{\rho}(x,y),
\end{equation*}
by Lemma \ref{epk}.
\end{proof}

It is now possible to construct the limit metric $\psi_{\rho}$ on $\mathbb R^2$.

\begin{theorem}\label{main2}
The limit metric is given by
\begin{equation}
\psi_{\rho}(x,y) = L_{\rho}(\max \{|x|,|y|\},\min\{|x|,|y|\}).
\end{equation}
\end{theorem}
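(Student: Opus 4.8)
The plan is to deduce the theorem from the Lemma immediately preceding it — which gives $\psi_\rho(x,y)=L_\rho(x,y)$ at all rational points with $x>y>0$ — by exploiting the symmetry of the periodic medium together with continuity.

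First I would record the symmetry. The union of ``bad'' squares $\tfrac12(1-\rho,1+\rho)^2+\mathbb Z^2$ on which $a_\rho=\beta$ is invariant under the two coordinate reflections $(x,y)\mapsto(-x,y)$, $(x,y)\mapsto(x,-y)$ and under the swap $(x,y)\mapsto(y,x)$, because each factor $\tfrac12(1-\rho,1+\rho)$ is centred at a half-integer and $\mathbb Z^2$ is invariant under all three; hence $a_\rho\circ R=a_\rho$ for every $R$ in the dihedral group $G$ they generate. For $R\in G$, applying $R^{-1}$ to an admissible curve for the endpoint $R\xi$ in the homogenisation formula \eqref{asymp} produces an admissible curve for $\xi$ with exactly the same value of the functional, since $R$ is orthogonal and fixes the origin; thus the $\ep$-infima, and therefore $\psi_\rho$, satisfy $\psi_\rho(R\xi)=\psi_\rho(\xi)$. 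Choosing $R$ to move $\xi$ into the closed cone $\mathcal R:=\{(x,y):x\ge y\ge 0\}$ yields $\psi_\rho(x,y)=\psi_\rho(|x|,|y|)=\psi_\rho(\max\{|x|,|y|\},\min\{|x|,|y|\})$, so it remains only to prove $\psi_\rho=L_\rho$ on $\mathcal R$.

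Next I would appeal to continuity on both sides. The function $\psi_\rho$ is convex and finite (indeed $\psi_\rho\le\beta\|\cdot\|$), hence continuous on $\mathbb R^2$. On the other side, reading the right-hand side of \eqref{len_ncase} as a function of real arguments $x=M\ge y=N>0$ and recalling \eqref{lengthhh}, one checks $L_\rho$ is continuous on $\{x>y>0\}$: off the lines $x=ky$ it is affine in $(x,y)$ with $\floor{x/y}$ locally constant, and across $x=ky$ both one-sided limits equal $y\,\ell_{\rho}(1)+y\,\ell_{\rho}(k)$; moreover $\ell_{\rho}(t)=t-(1-\rho)+o(1)$ as $t\to\infty$ gives $\lim_{y\downarrow 0}L_\rho(x,y)=x$, so $L_\rho$ extends continuously to all of $\mathcal R$. (This is the extension implicit in Lemma~\ref{epk}, whose scaling identity also shows $L_\rho$ is positively $1$-homogeneous, as $\psi_\rho$ must be.)

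Finally, the set $\{(x,y)\in\mathbb Q^2:x>y>0\}$ is dense in $\mathcal R$, and on it $\psi_\rho$ and $L_\rho$ agree by the preceding Lemma; two continuous functions agreeing on a dense set coincide, so $\psi_\rho=L_\rho$ on $\mathcal R$, which together with the first step gives the claim for all of $\mathbb R^2$. The one step I expect to demand genuine care is the continuity of $L_\rho$ up to $\partial\mathcal R$: matching the one-sided limits across the discontinuity lines of $\floor{x/y}$, and extracting $L_\rho(x,0)=x$ from the large-argument asymptotics of $\ell_{\rho}$. The symmetry reduction and the density argument are routine.
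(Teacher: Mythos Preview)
Your argument is correct and follows essentially the same two-step route as the paper --- symmetry to reduce to the cone $x\ge y\ge 0$, then continuity plus density of rationals to pass from the preceding Lemma to the full cone. Your version is in fact more careful than the paper's one-line sketch: you explicitly justify the dihedral invariance of $\psi_\rho$ from that of $a_\rho$, and you verify the continuity of $L_\rho$ across the lines $x=ky$ and at $y=0$, which the paper leaves implicit.
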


\begin{proof}
Use the fact that $\psi_{\rho}$ is continuous to extend to $(x,y) \in \mathbb R^2$, $x \geq y \geq 0$. To extend to other regions of $\mathbb R^2$, follow an identical procedure as before, applying rotations and reflections as necessary.
\end{proof}

Diagrams of the limit metric for different values of $\rho$ are given in figure \ref{figur:1}. The properties of $\psi_{\rho}$ are discussed in the next section.

\begin{figure}[htp]
  \centering
  \label{f1}
  \caption{Diagram of the the structure of the set $\{ x \in \mathbb R^2 : \psi_{\rho}(x) = 1\}$.  The dashed lines are lines of the form $y = \pm x/k$ for $k \in \mathbb N$. The lines of discontinuity accumulate at the $x$ and $y$ axis. The structure of $\psi_{\rho}$ on other quadrants is obtained by symmetry.}
  \subfloat[$\rho = 1$]{\label{figur:1}\includegraphics[width=70mm]{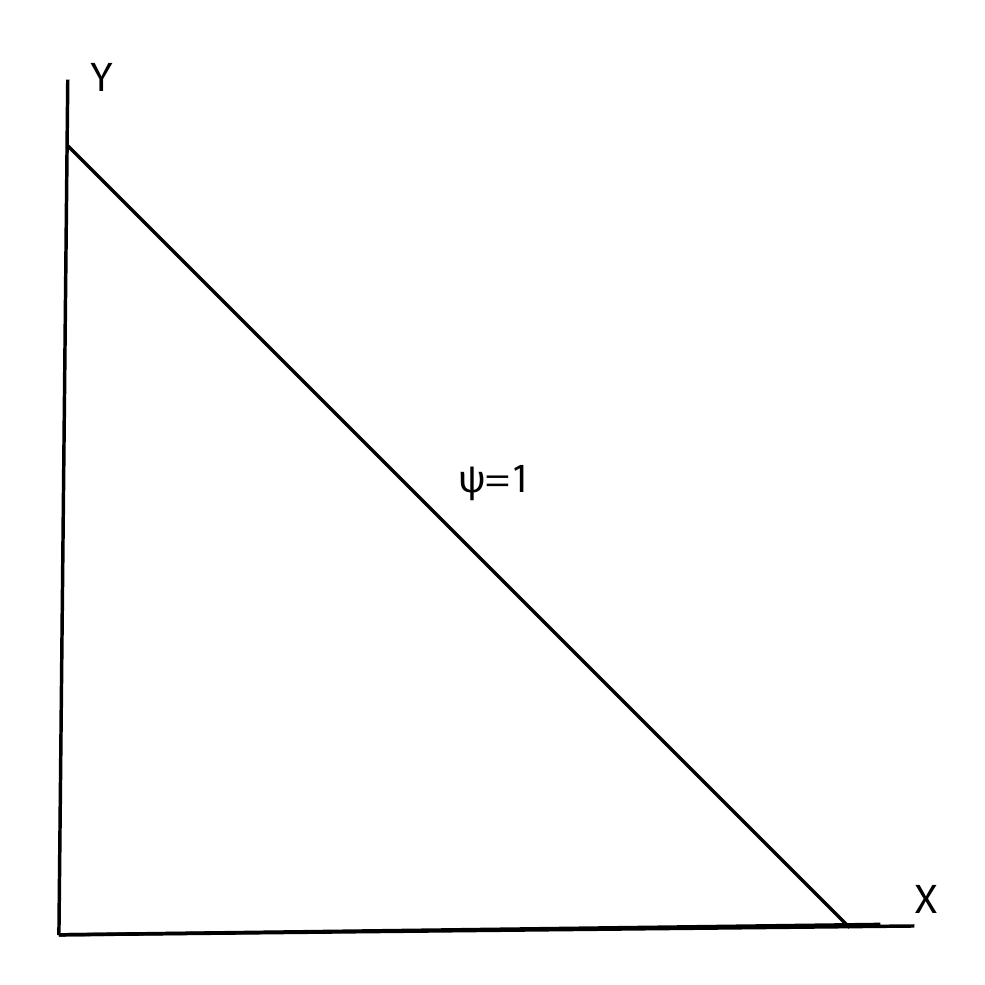}}
  \subfloat[$\rho \in (\tfrac{1}{2},1)$]{\label{figur:2}\includegraphics[width=70mm]{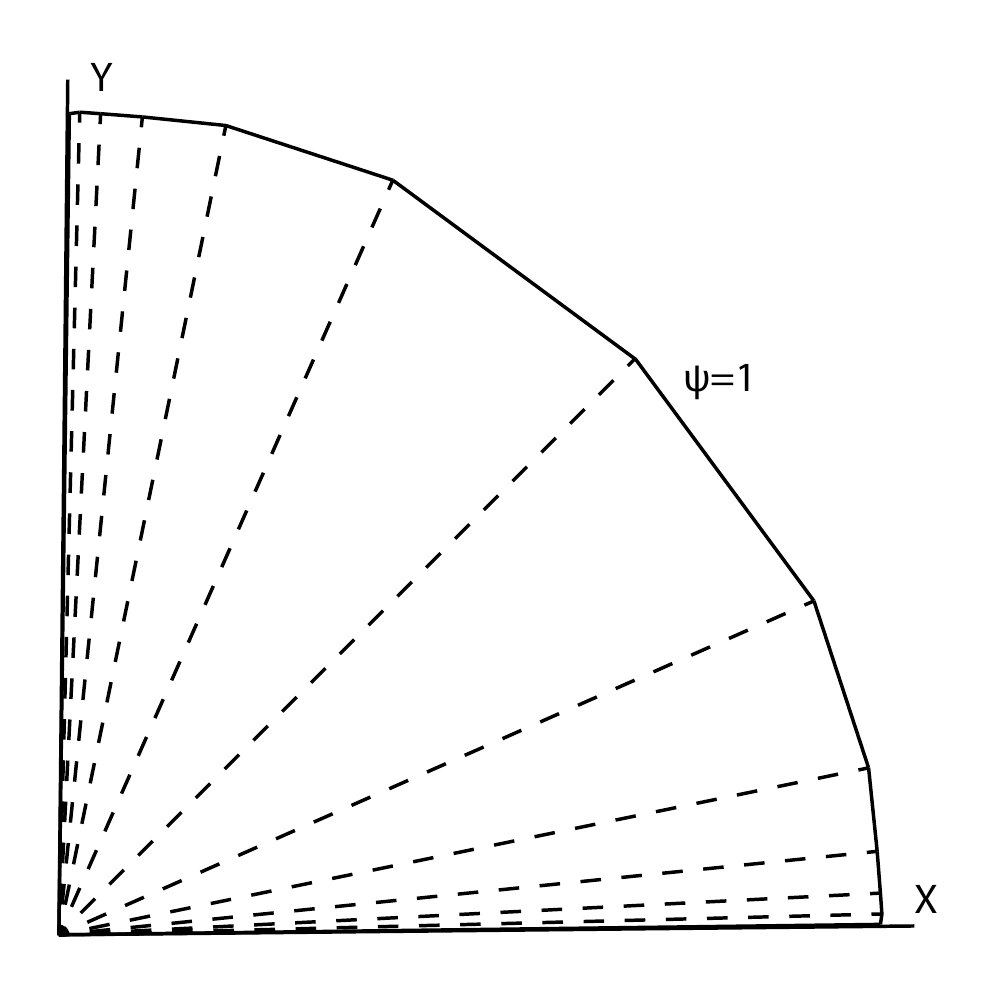}}
\end{figure}

\subsection{Properties of the limit metric}

It remains to study the structure of $\psi_{\rho}$. We show that it is piecewise affine outside of countably many lines of discontinuity. 

\begin{Lemma}\label{ndif}
The function $\psi_{\rho}$, restricted to points where $x > y > 0$, fails to be differentiable along the lines
\begin{equation*}
y = \frac{x}{k+1}, \; k \in \mathbb N,
\end{equation*}
and $y=x$, $y=0$. Furthermore, $\psi_{\rho}$ is piecewise affine.
\end{Lemma}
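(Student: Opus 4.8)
The plan is to exploit the explicit closed form \eqref{len_ncase} for $\mathscr L_\rho(M,N)$. By the previous Lemma, for $(x,y)\in\mathbb Q^2$ with $x>y>0$ we have $\psi_\rho(x,y)=L_\rho(x,y)$, where $L_\rho$ is the extension of \eqref{len_ncase}, and by continuity this identity persists for all real $x>y>0$. So I would fix an integer $k\in\mathbb N$ and work on the open cone $C_k:=\{(x,y):kx<(k+1)y$ and $(k+1)x> (k+2)y$, $x>y>0\}$, i.e. the region where $\lfloor x/y\rfloor$ is constant (equal to $k$; one has to be a little careful translating between $\lfloor M/N\rfloor$ and the real ratio, but on each such cone the floor is locally constant). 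On $C_k$ the formula \eqref{len_ncase} reads, writing $R=k$,
\begin{equation*}
\psi_\rho(x,y)=y\,\ell_\rho(1)+(x-ky)\,\ell_\rho(k+1)+\big((k+1)y-x\big)\,\ell_\rho(k),
\end{equation*}
which is manifestly affine in $(x,y)$ (the coefficients $\ell_\rho(1),\ell_\rho(k),\ell_\rho(k+1)$ are constants, by \eqref{lengthhh}). This gives piecewise affineness away from the lines $y=x/(k+1)$, $k\in\mathbb N$, and the degenerate edges $y=x$, $y=0$.

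Next I would check non-differentiability across the interface $y=x/(k+1)$ separating $C_k$ from $C_{k+1}$. The one-sided gradients are the constant gradients of the two affine pieces:
\begin{equation*}
\nabla\psi_\rho\big|_{C_k}=\big(\ell_\rho(k+1)-\ell_\rho(k),\ \ell_\rho(1)-k\ell_\rho(k+1)+(k+1)\ell_\rho(k)\big),
\end{equation*}
and the analogous expression with $k$ replaced by $k+1$ on $C_{k+1}$. Subtracting, the difference in the first component is $\ell_\rho(k+2)-2\ell_\rho(k+1)+\ell_\rho(k)$, which is strictly positive by strict convexity of $\ell_\rho$ on $[1,\infty)$ — exactly the second-derivative positivity \eqref{mono1} already established (this is essentially the content of Lemmas \ref{min1}/\ref{min2} evaluated at consecutive integers). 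Hence the two gradients disagree, so $\psi_\rho$ is not differentiable along $y=x/(k+1)$. I would also record that $\psi_\rho$ is nonetheless continuous across these lines — this follows either from the earlier continuity statement or by directly matching the two affine formulas at $x=(k+1)y$ — so the failure of differentiability is a genuine "crease", and the pieces fit together into a continuous piecewise affine function with countably many such creases accumulating at $y=0$. For the boundary lines $y=x$ and $y=0$: along $y=x$ one compares $C_1$ with the reflected behaviour in the region $x<y$ given by Theorem \ref{main2} (the roles of $\max$ and $\min$ swap), and along $y=0$ with the region $y<0$; in both cases the symmetrised formula produces a corner, which I would verify by the same one-sided-gradient computation.

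The main obstacle, I expect, is bookkeeping rather than depth: one must be careful that the closed form \eqref{len_ncase}, derived for $(M,N)\in\mathbb N^2$ with $M>N$, really does extend to a piecewise-affine function of two real variables with the floor function $\lfloor M/N\rfloor$ becoming $\lfloor x/y\rfloor$, and that this latter function is constant precisely on the cones $C_k$ with jumps exactly on the rays $y=x/(k+1)$. One should double-check the edge case $k=1$ (adjacent to $y=x$) and confirm that no further lines of non-differentiability are hidden inside a single cone — but this is immediate since each piece is affine there. The only genuinely quantitative input is the strict convexity \eqref{mono1}, which is already in hand, so the argument is essentially a finite computation of gradients plus an appeal to convexity.
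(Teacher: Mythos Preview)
Your approach is essentially identical to the paper's: both write $\psi_\rho(x,y)=y\,\ell_\rho(1)+(x-ky)\,\ell_\rho(k+1)+((k+1)y-x)\,\ell_\rho(k)$ on the cone where $\lfloor x/y\rfloor=k$, read off the constant gradient $(\ell_\rho(k+1)-\ell_\rho(k),\ \ell_\rho(1)-k\ell_\rho(k+1)+(k+1)\ell_\rho(k))$, and show the first components differ across the interface $y=x/(k+1)$ via the strict convexity \eqref{mono1} (the paper writes this as $\int_{k+1}^{k+2}\ell_\rho'\,dx-\int_k^{k+1}\ell_\rho'\,dx>0$, you phrase it as the second difference $\ell_\rho(k+2)-2\ell_\rho(k+1)+\ell_\rho(k)>0$, which is the same thing). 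One cosmetic slip: your defining inequalities for $C_k$ are garbled---they should read $ky<x<(k+1)y$, i.e.\ $x/(k+1)<y<x/k$, not $kx<(k+1)y$ and $(k+1)x>(k+2)y$---but since your subsequent formula and interface are the correct ones, this does not affect the argument.
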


\begin{proof}
For each $(x,y)$ such that $x > y > 0$ there exists $k \in \mathbb N$ such that $1 \leq k \leq {x}/{y} < k + 1$, and therefore $k = \floor{{x}/{y}}, \text{ and } {x}/{(k+1)} \leq y < {x}/{k}$. Consequently, using \eqref{lengthhh}, the limit metric takes the form
\begin{align*}
\psi_{\rho}(x,y) &= y\ell_{\rho}(1) + \left(x - ky \right)\ell_{\rho}(k+1) + \left(y - x + ky \right)\ell_{\rho}(k)\\
& = \alpha(\rho,k) x + \beta(\rho,k) y,
\end{align*}
where we have set $\alpha(\rho,k) = \ell_{\rho}(k+1)- \ell_{\rho}(k)$ and $\beta(\rho,k) = \ell_{\rho}(1) + k \left( \ell_{\rho}(k) - \ell_{\rho}(k+1) \right) +  \ell_{\rho}(k)$. Clearly, on the set of points such that ${x}/({k+1}) < y < {x}/{k}$ it holds that $D\psi(x,y) = \left( \alpha(\rho,k) , \beta(\rho,k) \right) =: D\psi_k$. This demonstrates that outside of the lines $y = {x}/{(k+1)}, \; k \in \mathbb N$, $\psi_{\rho}$ is in fact affine. It therefore suffices to verify that the metric is not differentiable along these lines, that is, to show that for $k \in \mathbb N$ that $D\psi_k \neq D\psi_{k+1}$, for $k \in \mathbb N$. To this end 
\begin{align*}
\alpha(\rho,k+1) - \alpha(\rho,k) & = \ell_{\rho}(k+2)- \ell_{\rho}(k+1) - \left( \ell_{\rho}(k+1)- \ell_{\rho}(k) \right),\\
& = \int_{k+1}^{k+2} \frac{d \ell_{\rho}}{dx}(x) \d x -  \int_{k}^{k+1} \frac{d \ell_{\rho}}{dx}(x) \d x\\
& > \frac{d \ell_{\rho}}{dx}(k+1) - \frac{d \ell_{\rho}}{dx}(k+1) = 0,
\end{align*}
using the strict monotonicity of $d\ell_{\rho}/dx$ by \eqref{mono1}. The lines $y=x$ and $y=0$ follow with suitable modifications. 
\end{proof}

As a consequence of the piecewise affine structure, the following corollary also holds. 

\begin{corollary}
The level sets of $\psi_{\rho}$ are not strictly convex. 
\end{corollary}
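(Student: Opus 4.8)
The plan is to combine the piecewise affine structure from Lemma \ref{ndif} with the convexity and positive $1$-homogeneity of $\psi_\rho$. Recall that $\psi_\rho$ is convex and satisfies $\|\xi\| \le \psi_\rho(\xi) \le \beta\|\xi\|$, so it is positively homogeneous of degree one; hence every level set $\{\psi_\rho = c\}$ with $c>0$ is a dilate of $\{\psi_\rho = 1\} = \partial K$, where $K := \{\xi \in \mathbb R^2 : \psi_\rho(\xi) \le 1\}$ is a convex body with the origin in its interior. Thus it suffices to show that $\partial K$ contains a nondegenerate line segment, which is exactly the statement that $K$ (equivalently, the level set) is not strictly convex.

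To produce such a segment I would fix $k \in \mathbb N$ and work in the open convex cone $C_k := \{(x,y) : x/(k+1) < y < x/k\}$, which is a genuine two-dimensional cone contained in $\{x>y>0\}$. By Lemma \ref{ndif}, $\psi_\rho$ agrees on $C_k$ with the affine (indeed linear) function $L(x,y) = \alpha(\rho,k)x + \beta(\rho,k)y$, whose coefficients are not both zero. Choosing two distinct rays inside $C_k$ and letting $p_1, p_2$ be the points on these rays with $\psi_\rho(p_i) = 1$ — unique on each ray since $L$ is positive and linear along it — we get $p_1 \neq p_2$ and $[p_1,p_2] \subset C_k$ by convexity of $C_k$. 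Since $L$ is affine and equals $1$ at both endpoints, $L \equiv 1$ on $[p_1,p_2]$, so $[p_1,p_2] \subset \{\psi_\rho = 1\} = \partial K$.

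The existence of this nondegenerate segment in $\partial K$ shows $K$ is not strictly convex, and the same argument transported by the rotations and reflections of Theorem \ref{main2} handles the remaining quadrants. The only point needing a moment's care is that the segment is genuinely nondegenerate, i.e.\ that $C_k$ really contains two distinct rays; this is immediate because $x/(k+1) < x/k$ for $x>0$ and $k \ge 1$, so $C_k$ has nonempty interior. Beyond recording these observations I do not anticipate any real obstacle.
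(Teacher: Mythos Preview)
Your argument is correct and is exactly the fleshing-out of what the paper leaves implicit: the paper offers no proof beyond the sentence ``As a consequence of the piecewise affine structure, the following corollary also holds,'' and your use of Lemma~\ref{ndif} to exhibit a nondegenerate segment in $\{\psi_\rho=1\}\cap C_k$ is precisely that consequence made explicit. One small remark: the bounds $\|\xi\|\le\psi_\rho(\xi)\le\beta\|\xi\|$ do not by themselves force positive $1$-homogeneity---that comes from the asymptotic formula \eqref{asymp} or directly from the explicit form of $L_\rho$---but your core step (a nonzero linear function on an open two-dimensional cone has a genuine line segment as its level set there) does not actually need homogeneity, so the argument stands.
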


\begin{remark}
The arguments of this paper can be easily adapted to the case where the region of higher length density is on rectangles rather than squares, provided the minimum side length is greater than $1/2$. A similar piecewise affine structure with infinitely many lines of discontinuity can be derived. The case when $\rho \leq \frac{1}{2}$ would need to be treated via different arguments, since the structure provided by Lemmas \ref{TLBR} and \ref{BRTL} no longer holds. Additionally, the case when $\beta \leq 2$ would require additional reasoning, an example such additional steps for the chessboard geometry can be found in \cite{amar09a}.
\end{remark}

\bibliographystyle{plain}
\bibliography{Refs}

\begin{thebibliography}{10}

\bibitem{acerbi84a}
E.~Acerbi and G.~Buttazzo.
\newblock On the limits of periodic {R}iemannian metrics.
\newblock {\em J. Analyse Math.}, 43:183--201, 1984.

\bibitem{amar09a}
M.~Amar, G.~Crasta, and A.~Malusa.
\newblock On the {F}insler metric obtained as limits of chessboard structures.
\newblock {\em Adv. Calc. Var.}, 2:321--360, 2009.

\bibitem{amar98a}
M.~Amar and E.~Vitali.
\newblock Homogenization of periodic {F}insler metrics.
\newblock {\em J. Convex Anal.}, 5(1):171--186, 1998.

\bibitem{braides98a}
A.~Braides and A.~Defranceschi.
\newblock {\em Homogenisation of multiple integrals}.
\newblock Oxford University Press, 1998.

\bibitem{buttazzo01a}
G.~Buttazzo, L.~{De Pascale}, and I.~Fragal\'a.
\newblock Topological equivalence of some variational problems involving
  distances.
\newblock {\em Discrete and Continuous Dynamical Systems}, 7(2):247--258, April
  2001.

\bibitem{lions09a}
P.~Cardaliaguet and P.-L. Lions.
\newblock A discussion about the homogenization of moving fronts.
\newblock {\em J. Math. Pure Appl.}, 91(4):339--363, 2009.

\bibitem{concordel97a}
M.~C. Concordel.
\newblock Periodic homogenisation of {H}amilton-{J}acobi equations. {II}.
  {E}ikonal equations.
\newblock {\em Proc. Roy. Soc. Edinburgh Sect. A}, 127:665--689, 1997.

\bibitem{c03}
B.~Craciun and K.~Bhattacharya.
\newblock {H}omogenisation of a {H}amilton-{J}acobi equation associated with
  the geometric motion of an interface.
\newblock {\em Proc. Roy. Soc. Edinburgh Sect. A}, 133A:773--805, 2003.

\bibitem{evans92a}
L.C. Evans.
\newblock Periodic homogenisation of certain fully nonlinear {PDE}.
\newblock {\em Proc. Roy. Soc. Edinburgh Sect. A}, 120:245--265, 1992.

\bibitem{gomes01a}
D.~Gomes and L.C. Evans.
\newblock Effective {H}amiltonians and averaging for {H}amiltonian dynamics
  {I}.
\newblock {\em Arch. Ration. Mech. Anal.}, (1), 2001.

\bibitem{gomes04a}
D.~Gomes and A.~Oberman.
\newblock Computing the effective {H}amiltonian: {A} varational approach to
  homogenization.
\newblock {\em SIAM J. Control Optim.}, 43(3):792--812, 2004.

\bibitem{jost05a}
J.~Jost.
\newblock {\em Riemannian Geometry and Geometric Analysis}.
\newblock Universitext. Springer-Verlag, fourth edition, 2005.

\bibitem{lions88a}
P.-L. Lions, G.~Papanicolaou, and S.R.S. Varadhan.
\newblock Homogenisation of {H}amilton-{J}acobi equations.
\newblock Preprint, 1988.

\bibitem{oberman09a}
A.~Oberman, R.~Takei, and A.~Vladimirsky.
\newblock Homogenisation of metric {H}amilton-{J}acobi equations.
\newblock {\em Multiscale Model. Simul.}, 8(2):269--295, 2009.

\bibitem{suttontese}
D.~C. Sutton.
\newblock {\em Macroscopic {H}amiltonian systems and their effective
  description}.
\newblock PhD thesis, University of {B}ath, 2013.

\end{thebibliography}

\end{document}